\newtheorem{theorem}{Theorem}[section]
\newtheorem{definition}[theorem]{Definition}
\newtheorem{lemma}[theorem]{Lemma}
\begin{document}

\title[On the complexity function for sequences which are not uniformly recurrent]{On the complexity function for sequences which are not uniformly recurrent}

\begin{abstract}
We prove that every non-minimal transitive subshift $X$ satisfying a mild aperiodicity condition satisfies $\limsup c_n(X) - 1.5n = \infty$, and give a class of examples which shows that the threshold of $1.5n$ cannot be increased. As a corollary, we show that any transitive $X$ satisfying $\limsup c_n(X) - n = \infty$ and $\limsup c_n(X) - 1.5n < \infty$ must be minimal. We also prove some restrictions on the structure of transitive non-minimal $X$ satisfying $\liminf c_n(X) - 2n = -\infty$, which imply unique ergodicity (for a periodic measure) as a corollary, which extends a result of Boshernitzan \cite{bosh} from the minimal case to the more general transitive case. 
\end{abstract}

\date{}
\author{Nic Ormes}
\address{Nic Ormes\\
Department of Mathematics\\
University of Denver\\
2390 S. York St.\\
Denver, CO 80208}
\email{normes@du.edu}
\urladdr{www.math.du.edu/$\sim$normes/}
\author{Ronnie Pavlov}
\address{Ronnie Pavlov\\
Department of Mathematics\\
University of Denver\\
2390 S. York St.\\
Denver, CO 80208}
\email{rpavlov@du.edu}
\urladdr{www.math.du.edu/$\sim$rpavlov/}
\thanks{The second author gratefully acknowledges the support of NSF grant DMS-1500685.}
\keywords{Symbolic dynamics, word complexity, transitive, minimal, uniquely ergodic}
\renewcommand{\subjclassname}{MSC 2010}
\subjclass[2010]{Primary: 37B10; Secondary: 05A05, 37B20}

\maketitle

\section{Introduction and definitions}\label{Intro}

In this work, we describe some simple connections between the recurrence properties of a two-sided sequence $x$ and the so-called \textbf{word complexity function} $c_n(x)$ which measures the number of words of length $n$ appearing in $x$. 
One of the most fundamental results of this sort is the Morse-Hedlund theorem, which has slightly different statements in the one- and two-sided cases (see \cite{paul}).

\begin{theorem}{\rm (Morse-Hedlund Theorem)}\label{MH}
Suppose that $A$ is a finite alphabet, $x \in A^{\mathbb{N}}$ or $x \in A^{\mathbb{Z}}$, and there exists $n$ such that the number of $n$-letter subwords of $x$ is less than or equal to $n$. Then, if $x \in A^{\mathbb{Z}}$, $x$ must be periodic, and if $x \in A^{\mathbb{N}}$, then $x$ must be eventually periodic.
\end{theorem}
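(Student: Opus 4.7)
The plan is to study the growth behavior of $c_n(x)$ using the observation that it is nondecreasing in $n$, and exploit the pigeonhole principle to locate an index where it stops growing. The key technical fact is that each length-$k$ subword of $x$ extends to at least one length-$(k+1)$ subword of $x$, so the map from length-$(k+1)$ subwords to length-$k$ subwords defined by dropping the last letter is surjective; hence $c_{k+1}(x) \geq c_k(x)$, with equality precisely when every length-$k$ subword has a unique right-extension in $x$.

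First I would dispose of the degenerate case $c_1(x) = 1$, in which $x$ is constant. Otherwise $c_1(x) \geq 2$, and since the integer-valued sequence $c_k(x)$ is nondecreasing, the hypothesis $c_n(x) \leq n$ forces $c_{k+1}(x) = c_k(x)$ for some $1 \leq k < n$, by a counting argument: if every increment were at least $1$, then $c_n(x) \geq c_1(x) + (n-1) \geq n+1$. Fix such a $k$; by the observation above, each length-$k$ subword $w$ of $x$ has a unique right-extension, which lets me define a map $T$ on the (finite) set $L_k$ of length-$k$ subwords of $x$ by $T(w_1 \cdots w_k) = w_2 \cdots w_k a$, where $a$ is the unique letter with $w_1 \cdots w_k a \in L_{k+1}$. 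By construction, if $w$ is the length-$k$ window of $x$ starting at position $m$, then $T(w)$ is the window starting at position $m+1$.

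For the one-sided case $x \in A^{\mathbb{N}}$, finiteness of $L_k$ immediately implies that the forward orbit of any window under $T$ is eventually periodic, which translates to eventual periodicity of $x$. For the two-sided case $x \in A^{\mathbb{Z}}$, I would additionally show that $T$ is a bijection on $L_k$: given any $v \in L_k$, $v$ occurs in $x$ at some position $m$, and the letter $b$ at position $m-1$ produces a word $w = b v_1 \cdots v_{k-1} \in L_k$ with $T(w) = v$, so $T$ is surjective on a finite set, hence a bijection. Therefore $T$ has finite order $N$, so the windows of $x$ at positions $j$ and $j+N$ coincide for every $j \in \mathbb{Z}$, which gives $x_j = x_{j+N}$ for all $j$ and establishes periodicity.

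I do not expect a serious obstacle here; the main subtlety is merely making sure the pigeonhole step uses $c_1(x) \geq 2$ (so that a $+1$ slack is available) and that the surjectivity-of-$T$ argument genuinely uses the two-sided nature of $x$ (every letter has a predecessor), since this is exactly the distinction that separates periodicity from eventual periodicity in the two statements.
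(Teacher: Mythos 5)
Your proof is correct and complete. The paper does not actually prove this theorem --- it is quoted as the classical Morse--Hedlund theorem with a citation --- so there is nothing to compare against; what you give is the standard argument: monotonicity of $c_k(x)$ plus $c_1(x)\geq 2$ forces $c_{k+1}(x)=c_k(x)$ for some $k<n$, equality makes the right-extension map single-valued, and the induced map $T$ on length-$k$ windows is eventually periodic on forward orbits (one-sided case) and a bijection of a finite set, hence of finite order (two-sided case, where you correctly use the existence of predecessors). The pigeonhole step and the distinction between periodicity and eventual periodicity are both handled exactly where they need to be.
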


One way to view this theorem is that it yields a lower bound on $c_n(x)$; if $x$ is two-sided and not periodic, then $c_n(x) \geq n + 1$ for all $n$. It is well-known that this bound is sharp; there exist aperiodic sequences called \textbf{Sturmian} sequences (see Chapter 6 of \cite{fogg} for an introduction) for which $c_n(x) = n + 1$ for all $n$. There are also other examples in the literature (\cite{aberkane}) with $1 < \liminf (c_n(x)/n) < \limsup (c_n(x)/n) < 1 + \epsilon$ for arbitrarily small $\epsilon$. All of these examples are \textbf{uniformly recurrent} sequences, meaning that for every subword $w$, there exists $N$ so that every $N$-letter subword contains $w$. Equivalently, a sequence is uniformly recurrent whenever the shift map acting on its orbit closure forms a minimal topological dynamical system. 

There are also fairly simple examples of sequences which are not uniformly recurrent and yet have $c_n(x) < n + k$ for all $n$ and some constant $k$, given by any $x$ which is not periodic but is eventually periodic in both directions. For example, if $x = \ldots 1212344444 \ldots$ then $c_n(x) = n+3$ for all $n \geq 1$, and $x$ is clearly not uniformly recurrent since the word $1234$ occurs just once. 

This leads to a natural question: must a sequence with complexity function ``close to $n$'' be either uniformly recurrent or eventually periodic in both directions? Our main result shows that this is indeed the case.

\begin{theorem}\label{mainthm}
If $x$ is not uniformly recurrent and it is not true that $x$ is eventually periodic in both directions, then $\limsup (c_n(x) - 1.5n) = \infty$. 
\end{theorem}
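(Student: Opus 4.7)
I argue by contrapositive: assume $c_n(x) \le 1.5n + C$ for some constant $C$ and all $n$, and aim to conclude that $x$ must be uniformly recurrent or eventually periodic in both directions. Let $X = \overline{O(x)}$, so $c_n(X) = c_n(x)$. Assuming $x$ is not uniformly recurrent, $X$ is non-minimal and therefore contains a proper minimal subshift $Y \subsetneq X$ (by Zorn's lemma).

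The main tool is the first-difference identity
\[
s_n := c_{n+1}(X) - c_n(X) = \sum_{w \in L_n(X)} (|E^+(w)| - 1) \ge r_n,
\]
where $E^+(w)$ denotes the set of right-extensions of $w$ in $L(X)$ and $r_n$ is the number of right-special words of length $n$. Since $X$ is infinite (otherwise $x$ is periodic), $s_n \ge 1$ for all $n$, so the hypothesis $c_n \le 1.5n + C$ forces $\sum_{k < n}(s_k - 1) \le 0.5n + O(1)$, meaning at most $0.5n + O(1)$ lengths $k$ have $s_k \ge 2$. Equivalently, for at least half of all lengths $k$, there is exactly one right-special word of length $k$ and it has exactly two right-extensions --- a Sturmian-like configuration. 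A symmetric statement holds for left-special words. I would then exploit the decomposition $c_n(X) = c_n(Y) + |L_n(X) \setminus L_n(Y)|$; when $Y$ is infinite, Morse--Hedlund gives $c_n(Y) \ge n+1$, which combined with the assumption forces $|L_n(X) \setminus L_n(Y)| \le 0.5n + O(1)$. The intended structural upshot is that the sparsity of these ``transitional'' words, together with the unique-extension structure at most lengths, pins down the behavior of $x$ outside of $Y$-neighborhoods and forces those transient portions of $x$ to be eventually periodic on each side, contradicting the hypothesis that $x$ is not eventually periodic in both directions.

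The hard part is converting the quantitative sparsity of special words and of $L_n(X) \setminus L_n(Y)$ into this structural conclusion. The Sturmian-like configuration at most lengths should pin down the extensions of most words, but executing this rigorously requires careful bookkeeping of how transitions into and out of $Y$-neighborhoods contribute to length-$n$ complexity, including tracking when a Sturmian-like length fails and what the unique right-special word looks like. A separate argument is likely required in the degenerate case where $Y$ is a single periodic orbit (so $c_n(Y)$ is bounded), where the entire $1.5n$ allowance must accommodate all of $X$'s structure; there one would leverage the fact that a non-eventually-periodic sequence whose orbit closure only has periodic minimal subsystems must exhibit ``defects'' at positions whose varying contexts produce excess complexity. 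The threshold $1.5$ is natural because it is the sum of the $n+1$ forced by Morse--Hedlund on an infinite $Y$ and an additional $0.5n$ allowance for transitional complexity before periodicity must dominate.
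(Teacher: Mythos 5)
Your proposal is a plan rather than a proof, and the gap sits exactly where the theorem is hardest. The easy observation that at most $0.5n + O(1)$ lengths can carry an extra right-special word, and the split according to whether the minimal subshift $Y \subsetneq X$ is infinite or a periodic orbit, do match the paper's opening moves (the paper gets the stronger bound $c_n(X) > 2n-k$ whenever $x$ is non-recurrent, or $X$ contains an infinite minimal subshift, or two minimal subshifts). But in the infinite-$Y$ case you stop at ``$|L_n(X)\setminus L_n(Y)| \le 0.5n + O(1)$'' and assert this ``pins down'' the transient behavior; the actual contradiction requires producing roughly $n$ distinct transitional words, which the paper does by taking a word $w \in W(x)\setminus W(Y)$, using recurrence (or, in the non-recurrent case, a word occurring once) to align $n-|w|+1$ windows at the rightmost occurrence of $w$, and adding the $n+1$ words of $Y$ from Morse--Hedlund. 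You never execute that count, nor do you separate the recurrent and non-recurrent cases, which need different alignments.

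More seriously, the case you defer with ``a separate argument is likely required'' --- $Y$ a single periodic orbit, the unique minimal subshift of $X$ --- is precisely the case where the constant $1.5$ lives; everywhere else the truth is $2n - O(1)$. Your heuristic that ``defects'' must ``produce excess complexity'' cannot be pushed through at the level of generality you describe, because the paper's Theorem~\ref{mainex} constructs non-uniformly-recurrent examples with $c_n(x) < 1.5n + g(n)$ for any unbounded $g$: the excess over $1.5n$ in this case can grow arbitrarily slowly, so no crude defect-counting gives a contradiction with $c_n \le 1.5n + C$; one needs an argument calibrated to extract exactly an extra $0.5n$. The paper does this by reducing (via a sliding block code) to alphabet $\{0,1\}$ with unique minimal set $\{0^\infty\}$, extracting one-sided sequences $y,z$ with $0^\infty y,\, z0^\infty \in X$, disposing of the subcases where $y$ or $z$ is non-unique, has finitely many $1$s, or has bounded runs of $0$s (all yield $2n-k$), and then, in the remaining case, choosing $m$ so that $y(1)\ldots y(m)$ contains exactly $2k$ ones and $\ell$ marking the first occurrence of $0^{m-2k+1}$ in $y$, and exhibiting $2\ell + m - 2k \ge 1.5(\ell+m-2k) + k$ distinct words of length $\ell+m-2k$ built from prefixes of $y$ padded by $0^i$ on the left and suffixes of $z$ padded by $0$s on the right. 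Nothing in your proposal supplies this construction or any substitute for it, so the proof as proposed does not go through.
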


This gives a large gap in the complexity functions achievable by sequences which are not uniformly recurrent; any such complexity is either below $n + k$ for all $n$ and some constant $k$, or has a subsequence along which $c_n(x) - 1.5n$ approaches infinity. In particular, this means that some interesting examples from the literature (\cite{aberkane}, \cite{heinis}) with $1 < \limsup (c_n(x)/n) < 1.5$ can only be achieved by uniformly recurrent sequences. We also show that Theorem~\ref{mainthm} is tight in the sense that the threshold of $1.5n$ cannot be meaningfully increased.

\begin{theorem}\label{mainex}
For any nondecreasing $g: \mathbb{N} \rightarrow \mathbb{R}$ with $\lim g(n) = \infty$, there exists an $x$ which is not uniformly recurrent where $c_n(x) < 1.5n + g(n)$ for sufficiently large $n$.
\end{theorem}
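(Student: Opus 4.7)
The plan is to build, for each $g$, a sequence $x$ that is not uniformly recurrent and whose complexity grows like $1.5n + O(1)$, which is less than $1.5n + g(n)$ for all sufficiently large $n$ once $g$ has grown large enough. To actually exhibit tightness of Theorem~\ref{mainthm}, I would want the example to also fail to be eventually periodic in both directions, so that it satisfies the hypotheses of that theorem while having complexity just above $1.5n$.

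I would start with a Sturmian sequence $s \in \{0,1\}^{\mathbb{Z}}$ of irrational slope $\alpha$, whose orbit closure $Y$ has complexity $n+1$, and then define $x$ as a modification of $s$ near the origin. A naive single-letter flip $x_0 \ne s_0$ is insufficient: each of the $n$ ``straddling'' length-$n$ subwords of $x$ containing position $0$ is generically not a Sturmian factor, so this construction yields $c_n(x) \sim 2n$. To push the count down to $\sim 1.5n$, I would need about half of these straddling subwords to still lie in $L(Y)$.

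A natural attempt is to splice together two carefully chosen elements $y^-, y^+ \in Y$, setting $x_k = y^-_k$ for $k<0$ and $x_k = y^+_k$ for $k \ge 0$. The straddling words then have the form $y^-_{-j}\cdots y^-_{-1} y^+_{0} \cdots y^+_{n-j-1}$, and one wants to choose $y^-,y^+$ so that for roughly half the indices $j$ this concatenation already lies in $L(Y)$. One route is to take $y^-, y^+$ to be asymptotic elements of $Y$ coming from the two Sturmian codings of the rotation orbit through $0$, adjusted so that the genuine mismatch is confined to a controlled finite region; straddling subwords whose window avoids that region are then automatically Sturmian. Non-uniform recurrence of $x$ follows because the defect word containing both sides of the splice cannot recur. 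The ``depth'' of the splice can be tuned as a function of $g$, so that the number of genuinely new subwords per length is bounded by $\tfrac{1}{2}n + g(n)$.

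The main obstacle is exactly this counting step: verifying that only $\tfrac{1}{2}n + O(g(n))$ of the straddling subwords fall outside $L(Y)$, rather than the $n$ one gets from naive splicings. I would expect this to require a delicate combinatorial analysis using the Rauzy graph, the unique right/left special factors, and the bispecial structure of the Sturmian, possibly with an inductive argument that exploits Sturmian substitutive self-similarity to pass from small lengths to arbitrary lengths. As an alternative, one might bypass the fine Sturmian combinatorics by starting from a minimal subshift already having complexity close to $1.5n$ (constructions of this kind exist in the literature cited in the paper, e.g. \cite{aberkane}, \cite{heinis}) and perturbing a generic point by a single defect that contributes only $O(g(n))$ further subwords per length; whichever route is taken, the essential difficulty is to ensure the defect is ``compatible'' enough with the ambient Sturmian-like structure that complexity grows at the right rate.
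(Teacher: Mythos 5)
There is a genuine gap here, and in fact the central hope of your construction is provably false, by the paper's own counting lemmas. Any splice of two points of a Sturmian subshift $Y$ which does not itself lie in $Y$ is a non-recurrent point whose two tails are aperiodic, and for such a point the argument of Lemma~\ref{nonreclem} (or Lemma~\ref{minimal1}) gives $c_n(x) \geq 2n - k$ for \emph{all} $n$: take a defect word $v \notin W(Y)$ (it exists since a subshift is determined by its language, and it occurs only finitely often since every occurrence must straddle the splice), let $w$ be a finite word containing all occurrences of $v$, so that $w$ occurs exactly once in $x$; then the $n-|w|+1$ length-$n$ windows containing $w$ are pairwise distinct (the unique occurrence of $w$ sits at different positions inside them) and none lies in $W_n(Y)$, while $W_n(Y) \subset W_n(x)$ contributes $n+1$ further words. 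Hence $c_n(x) \geq 2n - |w| + 2$: at most a bounded number of your ``straddling'' words can be absorbed into the Sturmian language, not half of them, no matter how cleverly $y^-,y^+$ are chosen. Since the theorem must handle $g$ growing arbitrarily slowly (e.g.\ $g(n) = \log n$), the desired bound $c_n(x) < 1.5n + g(n)$ for large $n$ forces $\liminf (c_n(x) - 2n) = -\infty$, which no such splice can achieve. The same objection kills your fallback of perturbing a generic point of a minimal subshift with complexity near $1.5n$: the perturbed point is again non-recurrent with aperiodic tails, and its orbit closure contains an infinite minimal subshift, so the $2n-k$ lower bound applies.

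The structural point you are missing is exactly the content of Lemmas~\ref{nonreclem}, \ref{minimal1}, \ref{minimal2} and Theorem~\ref{structhm}: a non-uniformly-recurrent example with $\liminf(c_n - 2n) = -\infty$ must be recurrent and its orbit closure must contain a \emph{unique} minimal subshift, which is a \emph{periodic orbit}. Accordingly, the paper's example is built around the fixed point $0^\infty$ rather than around any aperiodic minimal system: $x = 0^{\infty}.\,1\,0^{g_1}\,1\,0^{g_2}\,1\cdots$, where the gap $g_i$ equals $n_k$ exactly when $i$ is $2^k$ times an odd number, and $n_0 < n_1 < \cdots$ is chosen to grow rapidly relative to $g$. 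The complexity is then controlled by counting right-special words: $0^n$ is right-special for every $n$, and the ruler-sequence gap structure guarantees at most one additional right-special word of each length (a suffix of a canonical word $w(k)$), and only for $n$ in the set $\bigcup_k (n_k, |w(k)|]$; choosing $n_k$ large enough makes this set sparse enough that $c_n(x) \leq n + 1 + 0.5n + f(n) < 1.5n + g(n)$ for large $n$, while $x$ is recurrent but not uniformly recurrent. To fix your proof you would need to abandon the splice idea entirely in favor of a periodic-orbit-based construction of this kind.
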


Our proof is an analysis by cases, and in most of the cases, the much stronger bound $\liminf (c_n(x) - 2n) > - \infty$ holds. We can then prove a fairly strong structure on those $x$ for which it does not.

\begin{theorem}\label{structhm}
If $x$ is not uniformly recurrent, it is not true that $x$ is eventually periodic in both directions, and $\liminf (c_n(x) - 2n) = - \infty$, then there exist a constant $k$ and periodic orbit $M$ with the following property: for every $N$, there exists $m > N$ so that 
every $(3m + k)$-letter subword of $x$ contains an $m$-letter subword of a point in $M$. 
\end{theorem}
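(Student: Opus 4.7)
The plan is to proceed in three stages: first show that every minimal subshift of $X := \overline{\mathrm{orbit}(x)}$ must be a single periodic orbit, then fix such an orbit $M$ and show that the ``defects'' of $x$ with respect to $L(M)$ are uniformly bounded, and finally translate this into the $(3m+k)$ covering statement.

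\emph{Stage 1.} Suppose for contradiction $M' \subseteq X$ is minimal and aperiodic; by Morse--Hedlund, $c_n(M') \geq n + 1$. Because $X$ is transitive and $M' \subsetneq X$, there is a word $w \in L(X) \setminus L(M')$; the gaps between consecutive occurrences of $w$ in $x$ must be unbounded, as otherwise every point of $X$ (and hence every point of $M'$) would contain $w$. The next step is to establish $|L_n(X) \setminus L_n(M')| \geq n - O(1)$: select a $y \in X$ with $w$ at position $0$ whose structure is locally aperiodic in a sufficiently wide neighborhood (possible because $x$ is not eventually periodic in both directions and $w$ has unbounded gaps in $x$), and slide an $n$-window through the occurrence of $w$ in $y$ to produce $n - |w| + 1$ pairwise distinct abstract $n$-words, each containing $w$ and hence lying outside $L(M')$. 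Combining with $c_n(M') \geq n + 1$ yields $c_n(x) \geq 2n - O(1)$, contradicting $\liminf(c_n(x) - 2n) = -\infty$.

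\emph{Stage 2.} Let $M \subseteq X$ be a periodic minimal subshift of minimal period $p$, so $|L_n(M)| = p$ for $n \geq 1$. Define a \emph{defect at scale $m$} as a maximal run of consecutive positions $j \in \mathbb{Z}$ with $x_{[j, j + m - 1]} \notin L(M)$. The complexity hypothesis gives a subsequence $m_i \to \infty$ with $|L_{m_i}(X) \setminus L_{m_i}(M)| = c_{m_i}(x) - p \leq 2 m_i - i - p$. The goal is to show every defect of $x$ at scale $m_i$ has length at most $2m_i - i + O(1)$: a defect of length $L$ produces $L$ bad $m_i$-windows, yielding $L - O(1)$ distinct abstract words provided $x$ is not internally periodic within the defect. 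If $x$ is internally periodic within an unboundedly long defect, the corresponding periodic pattern generates another minimal periodic subshift of $X$ (distinct from $M$), and replacing $M$ with this new subshift --- which is better matched to the internal structure of the defect --- restores the distinctness. Choosing a constant $k$ to absorb all $O(1)$ terms gives defects of length at most $2m_i + k$ for all sufficiently large $i$.

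\emph{Stage 3.} For such $m = m_i$, every window of $x$ of length $3m+k$ must contain a non-defect position $q$ (i.e., a $q$ with $x_{[q, q + m - 1]} \in L(M)$), because otherwise the entire window would form a defect of length $\geq 3m + k > 2m + k$, contradicting the bound from Stage 2. This $q$ gives the required $m$-subword of $L(M)$.

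The main obstacle I expect is Stage 2's treatment of internally periodic defects and the uniform choice of $M$: the periodic orbit best matched to long defects may in principle change along the subsequence $m_i$, but the conclusion of the theorem requires a single fixed $M$. Reconciling this likely requires choosing $M$ via an extremal property (e.g., minimal period in $X$) and arguing that the complexity hypothesis forces all long defects to share a common internal period with $M$, so that the distinctness argument applies uniformly.
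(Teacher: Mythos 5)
Your Stage 2 contains the essential gap. Bounding defect lengths by counting ``bad'' $m$-windows requires those windows to be pairwise distinct, and your proposed dichotomy --- either the $L$ windows in a defect are distinct up to $O(1)$, or the defect is ``internally periodic'' and spawns a second periodic minimal subshift --- is not established and does not hold as stated. A repetition of two $m$-windows inside a defect gives at most a \emph{local} periodicity on their overlap (and nothing at all if they do not overlap); it does not make the defect globally periodic, does not produce a periodic point of $X$, and hence produces no contradiction and no alternative orbit to ``replace $M$'' with. What the paper uses to force the periodicity to propagate is a right-special-word count, not a raw word count: after a sliding-block-code reduction to alphabet $\{0,1\}$ with unique minimal subshift $\{0^\infty\}$, recurrence gives that $0^j1 \in W(X)$ for every $j$, so $0^j$ is right-special for every $j$; the hypothesis $\liminf(c_n - 2n) = -\infty$ then forces (by summing $c_{j+1}-c_j$) arbitrarily large $m$ at which $0^m$ is the \emph{only} right-special word of length $m$ and $c_m(X) \le 2m$. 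In a $3m$-window avoiding $0^m$, pigeonhole among the first $2m$ windows gives a repeated $m$-word, and since every non-$0^m$ word of length $m$ extends uniquely to the right, the repetition propagates to make the entire right tail periodic and $0^m$-free, contradicting uniqueness of the minimal subshift. Your proposal has nothing playing the role of this unique-extension mechanism, and you yourself flag the resulting fixed-$M$ problem as unresolved; that is precisely where the argument breaks.

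Two smaller points. In Stage 1 your distinctness claim for the $n-|w|+1$ windows through an occurrence of $w$ needs an actual argument (the paper takes the \emph{rightmost} occurrence of $w$ preceding a long $w$-free block from $W(M)$, so each window is identified by the position of its rightmost occurrence of $w$); ``locally aperiodic in a wide neighborhood'' is not a proof. Also, Stage 1 only excludes aperiodic minimal subshifts; you also need that $x$ is recurrent and that $X$ cannot contain \emph{two} minimal subshifts (the paper's separate $2n-k$ lemmas for the non-recurrent and two-minimal cases), since uniqueness of $M$ is exactly what the final contradiction --- and the fixed choice of $M$ in the statement --- depends on.
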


Informally, the conclusion of Theorem~\ref{structhm} says that $x$ can be partitioned, at arbitrarily large ``scales,'' into long 
(possibly infinite on one side) pieces of the periodic orbit $M$ and pieces not in $M$ which are not much longer. Unsurprisingly, this structure is quite similar to the structure of the examples proving Theorem~\ref{mainex}, as we will see in Section~\ref{proofs}.

Theorem~\ref{structhm} implies a useful corollary which extends a result of Boshernitzan. He proved in \cite{bosh} that if $X$ is minimal and $\liminf (c_n(X) - 2n) = -\infty$, then $X$ is uniquely ergodic, i.e. there is only one shift-invariant Borel probability measure on $X$. The following result uses the same complexity hypothesis, but applies to non-minimal transitive systems. 

\begin{theorem}\label{mainthm2}
If $X = \overline{\mathcal{O}(x)}$, $x$ is not uniformly recurrent, it is not true that $x$ is eventually periodic in both directions, and $\liminf (c_n(X) - 2n) = -\infty$, then $X$ is uniquely ergodic, with unique shift-invariant measure supported on a periodic orbit.
\end{theorem}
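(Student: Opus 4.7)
The plan is to show that every ergodic $\sigma$-invariant probability measure $\mu$ on $X$ coincides with the uniform measure $\mu_M$ on the periodic orbit $M$ furnished by Theorem~\ref{structhm}; the general case will then follow by ergodic decomposition. Since $M$ is closed and $\sigma$-invariant, ergodicity forces $\mu(M) \in \{0,1\}$, and when $\mu(M)=1$ ergodicity already gives $\mu = \mu_M$. It therefore suffices to rule out $\mu(M)=0$, for which I will derive a contradiction.

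The first step is a density estimate. Fix $z \in M$ of minimal period $p$, together with the constant $k$ and the unbounded sequence of scales $m$ produced by Theorem~\ref{structhm}. For each such $m$, set
\[
C_m = \{y \in X : \text{position $0$ lies in some $m$-subword of $M$ in $y$}\}
\]
and $C_\infty = \bigcap_m C_m$. The windowing property of Theorem~\ref{structhm} passes from $x$ to every $y \in X$ by orbit closure, so the starting positions of $m$-subwords of $M$ in $y$ meet every interval of length $2m+k+1$. Each such start anchors $m$ consecutive covered positions, so the covered set in any $y \in X$ has forward lower density at least $m/(2m+k+1)$. Applying Birkhoff's theorem to $\mathbf{1}_{C_m}$ at a $\mu$-generic point yields $\mu(C_m) \geq m/(2m+k+1)$, and monotonicity of the decreasing family $(C_m)$ gives $\mu(C_\infty) \geq \lim_m m/(2m+k+1) = 1/2$.

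The second step identifies $C_\infty$ inside one-sided eventually-periodic sets and derives a contradiction via a Poincar\'e-style recurrence argument. A point $y \in C_\infty$ lies in $M$-subwords of arbitrarily large length covering position $0$; since any $M$-subword of length $\geq p$ uniquely pins down the phase at position $0$, the maximal $M$-subword of $y$ covering $0$ must be infinite on at least one side. Hence $C_\infty \subseteq L \cup R$, where $L$ (resp.\ $R$) is the $\sigma$-invariant $F_\sigma$ set of $y \in X$ whose left (resp.\ right) tail agrees with some shift of $z$. By ergodicity $\mu(L),\mu(R) \in \{0,1\}$, and $\mu(L \cup R) \geq 1/2$ forces at least one of them to equal $1$; by symmetry we may assume $\mu(L) = 1$. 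For $y \in L$ the left-tail phase $\phi(y) \in \mathbb{Z}/p\mathbb{Z}$ is well-defined (uniqueness follows from minimality of $p$), and for $y \in L \setminus M$ the break position
\[
b(y) = \max\{N \in \mathbb{Z} : y_j = z_{(j+\phi(y)) \bmod p} \text{ for all } j \leq N\}
\]
is a finite integer satisfying $b(\sigma y) = b(y) - 1$. Consequently the level sets $B_n = \{y \in L \setminus M : b(y) = n\}$ are pairwise disjoint $\sigma$-translates of $B_0$ partitioning $L \setminus M$, all of common $\mu$-measure. Since $\sum_{n \in \mathbb{Z}} \mu(B_n) = \mu(L \setminus M) \leq 1$, this forces $\mu(B_0) = 0$, so $\mu(L \setminus M) = 0$ and $\mu(L) = \mu(M) = 0$, contradicting $\mu(L) = 1$.

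The main obstacle is the density estimate in the first step. Bounding the measure of the cylinder set $E_m = \{y : y_0 \cdots y_{m-1} \text{ is an } M\text{-subword}\}$ directly yields only $\mu(E_m) \geq 1/(2m+k+1) \to 0$, which is too weak. The decisive move is to replace start positions with the $m$ consecutive covered positions they anchor and work with $C_m$ in place of $E_m$, converting a shrinking lower bound into one approaching $1/2$. The remaining pieces (the maximality argument identifying $C_\infty$ inside $L \cup R$ and the cocycle/Poincar\'e computation for $b$) are essentially formal once the $1/2$ bound is in hand.
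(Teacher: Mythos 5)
Your argument is correct, but it takes a genuinely different route from the paper's. The paper first reduces to the case $M=\{0^{\infty}\}$ via the sliding block code of Lemma~\ref{0reduction}, supposes there is an ergodic $\mu\neq\delta_{0^{\infty}}$, picks $j$ with $\mu([0^j])<1/6$, checks that a $\mu$-generic point $z$ itself satisfies the hypotheses of Theorem~\ref{structhm} (not uniformly recurrent, not eventually periodic in both directions, $\liminf(c_n(z)-2n)=-\infty$), and applies Theorem~\ref{structhm} to $z$ at a single scale $m\geq 2j$ to force the frequency of $0^j$ in $z$ to be at least $(m-j)/3m\geq 1/6$ — an immediate numerical contradiction with genericity. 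You instead apply Theorem~\ref{structhm} to $x$ itself (whose hypotheses are exactly those of the present theorem), observe that its conclusion is a statement about $W(X)$ and hence holds in every $y\in X$, and replace the single-scale cylinder estimate by the all-scales set $C_\infty$, whose measure is at least $1/2$ by the covered-position density bound $m/(2m+k+1)$ (correct: consecutive starts of $M$-blocks are at distance at most $2m+k+1$, so each gap is covered in proportion at least $m/(2m+k+1)$). You then need a second, structural step — $C_\infty\subseteq L\cup R$ plus the break-position cocycle $b(\sigma y)=b(y)-1$ giving $\mu(L\setminus M)=0$ — which the paper avoids entirely. What each buys: your version dispenses with the sliding-block-code reduction and with verifying that the generic point inherits the hypotheses of Theorem~\ref{structhm}, at the cost of the extra $L\cup R$/wandering-set machinery; the paper's version is shorter and purely quantitative, using one scale and one cylinder. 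One phrase of yours should be tightened: ``the maximal $M$-subword of $y$ covering $0$ must be infinite on at least one side'' — maximal covering $M$-blocks need not be unique, and phase-pinning at position $0$ is not by itself the reason. The clean fix is a pigeonhole: along a subsequence of scales the covering blocks extend arbitrarily far to (say) the right, so every prefix of $y_{[0,\infty)}$ is an $M$-word, and nested $M$-words of length at least $p$ determine a common phase, whence $y\in R$; this is exactly what your later steps use, so the gap is purely expository.
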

(We would like to note that the proof in \cite{bosh} could theoretically be applied to transitive systems with very few changes, and so the main new content in our result is the triviality of the measure in the non-minimal case.)

Cyr and Kra (\cite{cyr-kra}) recently generalized a different result of Boshernitzan's, proving that under no assumption on $X$ whatsoever, for any $k \in \mathbb{N}$, $\liminf (c_n(X)/n) < k$ implies that $X$ has fewer than $k$ nonatomic shift-invariant measures which have so-called generic points. 
Theorem~\ref{mainthm2} applies only to the case $k = 2$ and assumes transitivity of $X$ and some aperiodicity of $x$, but uses a weaker complexity hypothesis and implies that $X$ cannot have multiple shift-invariant measures at all, rather than only forbidding multiple nonatomic shift-invariant measures.


\section{Definitions}\label{defs}

Let $A$ denote a finite set, which we will refer to as our alphabet.

\begin{definition} 
A bi-infinite sequence $x \in A^{\mathbb{Z}}$ is \textbf{periodic} if there exists $n \neq 0$ so that $x(k) = x(k+n)$ for all $k \in \mathbb{Z}$. A one-sided sequence $x \in A^{\mathbb{N}}$ is \textbf{eventually periodic} if there exist $n, N \in \mathbb{N}$ so that 
$x(k) = x(k+n)$ for all $k > N$; the definition is analogous for $x \in A^{-\mathbb{N}}$. A bi-infinite sequence $x$ is \textbf{eventually periodic in both directions} if $x(0) x(1) x(2) \ldots$ and $\ldots x(-2) x(-1)$ are each eventually periodic.
\end{definition}

\begin{definition}
A \textbf{subshift} $X$ on an alphabet $A$ is any subset of 
$A^{\mathbb{Z}}$ which is invariant under the left shift map $\sigma$ and closed in the product topology.
\end{definition}

\begin{definition}
A subshift $X$ is \textbf{transitive} if it can be written as $\overline{\mathcal{O}(x)}$ for some $x \in A^{\mathbb{Z}}$, where $\mathcal{O}(x) := \{\sigma^n x \ : \ n \in \mathbb{Z}\}$.
\end{definition}

\begin{definition}
A subshift $X$ is \textbf{minimal} if it contains no proper nonempty subshift; equivalently, if $X = \overline{\mathcal{O}(x)}$ for all $x \in X$.
\end{definition}

A routine application of Zorn's Lemma shows that every nonempty subshift contains a nonempty minimal subshift.

\begin{definition}
A \textbf{word} over $A$ is a member of $A^n$ for some $n \in \mathbb{N}$, which we call the \textbf{length} of $w$ and denote by 
$|w|$. A word $w$ is called a \textbf{subword} of a longer word or infinite or bi-infinite sequence $u$ if there exists $i$ so that $u(i + j) = w(j)$ for all $1 \leq j \leq |w|$.
\end{definition}

\begin{definition}
A sequence $x \in A^{\mathbb{Z}}$ is \textbf{recurrent} if every subword of $x$ appears infinitely many times within $x$, and \textbf{uniformly recurrent} if, for every $w \in W(x)$, there exists $N$ so that every $N$-letter subword of $x$ contains $w$ as a subword.
\end{definition}

\begin{definition} For any words $v \in A^n$ and $w \in A^m$, we define the \textbf{concatenation} $vw$ to be the word in $A^{n+m}$ whose first $n$ letters are the letters forming $v$ and whose next $m$ letters are the letters forming $w$.
\end{definition}

\begin{definition} For a word $u \in A^n$, if $u$ can be written as the concatenation of two words $u=vw$ then we say that 
$v$ is a \textbf{prefix} of $u$ and that $w$ is a \textbf{suffix} of $u$. 
\end{definition}

\begin{definition} 
For any infinite or bi-infinite sequence $x$, we denote by $W(x)$ the set of all subwords of $x$ and, for any $n \in \mathbb{N}$, 
define $W_n(x) = W(x) \cap A^n$, the set of subwords of $x$ with length $n$. For a subshift $X$, we define $W(X) = \bigcup_{x \in X} W(x)$ and $W_n(X) = \bigcup_{x \in X} W_n(x)$. 
\end{definition}

\begin{definition} 
For any infinite or bi-infinite sequence $x$, $c_n(x) := |W_n(x)|$ is the \textbf{word complexity function} of $x$; for a subshift $X$, $c_n(X)$ is similarly defined.
\end{definition}

\begin{definition}
A word $w$ is \textbf{right-special} within a subshift $X$ if there exist $a \neq b \in A$ so that $wa, wb \in W(X)$. 
\end{definition}

We note that for every subshift $X$ and $w \in W(X)$, there exists at least one letter $a$ so that $wa \in W(X)$. Therefore, for any $n$, $c_{n+1}(X) - c_n(X)$ is greater than or equal to the number of right-special words in $W_n(X)$. 

\begin{definition}
A \textbf{sliding block code with window size $k$} is a function $\phi$ defined on a subshift $X$ where $\phi(X)$ is a subshift and $(\phi(x))(i)$ depends only on $x(i) x(i+1) \ldots x(i + k - 1)$ for all $x \in X$ and $i \in \mathbb{Z}$.
\end{definition}

For a sliding block code $\phi$ with window size $k$, even though $\phi$ technically is defined on $X$, it induces an obvious action on words in $W_n(X)$ for $n \geq k$; for any such $w$, one can define $\phi(w) \in W_{n - k + 1}(\phi(X))$ to be $(\phi(x))(0) \ldots (\phi(x))(n-k+1)$ for any $x$ with $x(0) \ldots x(n-1) = w$. (This is independent of choice of $x$ by the definition of sliding block code.) This induces a surjection from $W_n(X)$ to $W_{n-k+1}(\phi(X))$, and so for any such $\phi$ and $n \geq k$, 
$c_n(X) \geq c_{n - k + 1}(\phi(X))$.

\section{Proofs}\label{proofs}

\subsection{Proof of Theorem~\ref{mainthm}}

Throughout, $x$ will represent a bi-infinite sequence and $X$ will represent its orbit closure, $X= \overline{\mathcal{O}(x)}$. Note that then $W_n(X)$ is just the set of words of length $n$ appearing as subwords of $x$, and $c_n(X)$ is the number of such words, i.e., $W_n(X)=W_n(x)$ and $c_n(X)=c_n(x)$. We assume throughout that $x$ is not uniformly recurrent and that it is not true that $x$ is eventually periodic in both directions, and will now break into various cases and give lower bounds on $c_n(x)$ in each.

\subsubsection{$x$ is non-recurrent}\label{nonrec}



\begin{lemma}\label{nonreclem}
If $x$ is non-recurrent and it is not true that $x$ is eventually periodic in both directions, then there exists a constant $k$ so that $c_n(X) > 2n - k$ for all $n$.
\end{lemma}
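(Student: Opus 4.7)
The plan is to use the non-recurrence of $x$ together with the two-sided aperiodicity hypothesis to produce a finite word $v \in W(x)$ occurring in $x$ exactly once, and then bound $c_n(X)$ from below by separately counting those $n$-length subwords of $x$ that contain $v$ and those that lie in the one-sided tail beyond $v$.

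Step 1 (finding $v$): Non-recurrence provides some $w \in W(x)$ with only finitely many occurrences, say $k$ of them, at positions $p_1 < \cdots < p_k$. I would argue that whenever $k \geq 2$, one can extend $w$ by appending letters from $x$ on the right or on the left so as to strictly decrease the occurrence count. Indeed, if every right-extension of $w$ maintained all $k$ occurrences, then the one-sided tails $x(p_j + |w|)\, x(p_j + |w| + 1) \cdots$ would agree as infinite sequences for every $j$, forcing the right tail of $x$ from position $p_1 + |w|$ to be periodic of period $p_j - p_1$; symmetrically for left-extensions and the left tail of $x$. If neither direction allowed a reduction, $x$ would be eventually periodic in both directions, contradicting the hypothesis. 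Iterating this reduction terminates at a word $v$ with exactly one occurrence.

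Step 2 (counting): Translate so that the unique occurrence of $v$ sits at position $0$, and let $m = |v|$. Since the property of being non-eventually-periodic passes to one-sided tails, we may assume (after reflecting $x$ if needed) that $y := x(m)\, x(m+1) \cdots$ is not eventually periodic. The one-sided Morse--Hedlund theorem applied to $y$ then gives at least $n + 1$ distinct $n$-length subwords of $y$, none of which contains $v$ because $v$ occurs in $x$ only at position $0$. Separately, for $n \geq m$, the $n - m + 1$ windows $x(i) \cdots x(i + n - 1)$ with $-n + m \leq i \leq 0$ are pairwise distinct, since each one locates the unique copy of $v$ at a different internal offset, and all of them contain $v$. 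Combining the two disjoint contributions yields $c_n(X) \geq 2n - m + 2$ for $n \geq m$, and absorbing the finitely many small $n$ into the constant gives $c_n(X) > 2n - k$ for all $n$ with a suitable $k$ (e.g. $k = 2m$).

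The main obstacle is Step 1. The two-sided aperiodicity hypothesis must be invoked essentially and simultaneously: failure to reduce via right-extension alone only forces the right tail of $x$ to be eventually periodic, and symmetrically for left-extension, so progress is made by playing the two directions against each other. Step 2 is then a routine counting once $v$ has been secured, and the disjointness of the two contributions is what converts the $n + 1$ bound from Morse--Hedlund into the desired roughly $2n$ bound.
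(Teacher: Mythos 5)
Your proposal is correct, and its counting step is exactly the paper's argument: once a word occurring exactly once in $x$ is in hand, one adds the $n+1$ words from the Morse--Hedlund bound on the aperiodic one-sided tail (which avoid that word) to the $n-m+1$ windows straddling its unique occurrence, giving $c_n(X) \geq 2n - m + 2$. The only place you diverge is the construction of the unique-occurrence word. The paper gets it in one line without any aperiodicity: starting from a word $v$ with finitely many occurrences, it takes $w$ to be the subword of $x$ stretching from the leftmost to the rightmost occurrence of $v$; any second occurrence of $w$ would produce an occurrence of $v$ strictly to the left of the leftmost or strictly to the right of the rightmost one, so $w$ occurs exactly once. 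Your iterative extension argument is also valid --- if no right-extension ever drops the occurrence count then all tails past the occurrences agree and the right side of $x$ is eventually periodic, and symmetrically on the left, so the aperiodicity hypothesis forces a strict decrease and the induction terminates --- but it is heavier and spends the ``not eventually periodic in both directions'' hypothesis on a step where the paper needs only non-recurrence, reserving that hypothesis purely for the Morse--Hedlund half of the count.
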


\begin{proof}

Since $x$ is not recurrent, there exists a word $v$ which appears in $x$ only finitely many times. We can then write $x = \ell w r$ where $w$ contains all occurrences of $v$ in $x$; then $w$ occurs only once in $x$. Then $\ell$ and $r$ do not contain $w$, and one of $\ell$ or $r$ is not eventually periodic. We treat only the $r$ case here, as the $\ell$ case is similar. Since $r$ is not eventually periodic, by Theorem~\ref{MH}, it contains at least $n+1$ distinct $n$-letter subwords for every $n$, and none of these contain $w$ as a subword. In addition, $x = \ell w r$ contains $n-|w|+1$ subwords of length $n$ which contain $w$, which are all distinct since they contain $w$ exactly once at different locations. Therefore, $c_n(X) \geq (n + 1) + (n - |w| + 1) = 2n - |w| + 2$ for all $n$, completing the proof.
\end{proof}

\subsubsection{$x$ is recurrent and not uniformly recurrent}\label{nonunifrec}

\

\

In this case, since $x$ is not uniformly recurrent, $X$ is not minimal. Then $X$ must properly contain some minimal subshift. 

\begin{lemma}\label{minimal1}
If $X$ properly contains an infinite minimal subshift $M$, then there exists a constant $k$ so that $c_n(X) > 2n - k$ for all $n$.
\end{lemma}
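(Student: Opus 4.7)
The plan is to split $W_n(X) = W_n(M) \sqcup (W_n(X) \setminus W_n(M))$ and lower-bound each piece linearly. Since $M$ is minimal and infinite, no element of $M$ can be periodic (else its finite orbit would be a minimal subshift equal to $M$), so by Morse-Hedlund applied to any $y \in M$, $c_n(M) \geq c_n(y) \geq n+1$ for all $n$. Set $p_n := c_n(X) - c_n(M) = |W_n(X) \setminus W_n(M)|$; the lemma will follow once I show $p_n \geq n - k'$ for some constant $k'$, since then $c_n(X) \geq 2n + 1 - k'$. Because $M$ is closed and $M \subsetneq X$, some cylinder of $X$ is disjoint from $M$, producing a word $u \in W(X) \setminus W(M)$ of length $m := |u|$; in particular $p_m \geq 1$. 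The sequence $p_n$ is nondecreasing, since for any $w \in W_n(X) \setminus W_n(M)$ and any right-extension $a$ with $wa \in W(X)$, the word $wa$ still lies outside $W(M)$ (as $W(M)$ is closed under prefixes), giving an injection $w \mapsto wa$ from $W_n(X) \setminus W_n(M)$ into $W_{n+1}(X) \setminus W_{n+1}(M)$. It therefore suffices to prove the strict inequality $p_{n+1} > p_n$ for every $n \geq m$; induction then yields $p_n \geq n - m + 1$ for $n \geq m$, which (after adjusting the constant for small $n$) gives $c_n(X) \geq 2n - k$ for all $n$.

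For the strict inequality, I would use the decomposition
\[
p_{n+1} - p_n \;=\; \sum_{w \in W_n(X) \setminus W_n(M)} (r(w) - 1) \;+\; E_n,
\]
where $r(w) := |\{a \in A : wa \in W(X)\}|$ and $E_n$ counts pairs $(w,a)$ with $w \in W_n(M)$ but $wa \in W_{n+1}(X) \setminus W_{n+1}(M)$. Both summands are nonnegative, so $p_{n+1} = p_n$ forces $E_n = 0$, meaning every $X$-right-extension of a word in $W_n(M)$ stays inside $W(M)$. Iterating this condition along $x$: whenever $x[i, i+n-1] \in W_n(M)$, I would show by induction on $j$ that $x[i+j, i+j+n-1] \in W_n(M)$ for every $j \geq 0$. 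Such an $i$ must exist, because $W_n(M)$ is nonempty and $W_n(M) \subseteq W_n(X) = W_n(x)$. On the other hand, by recurrence of $x$ the word $u$ appears at some position $j \geq i$, and for $n \geq m$ the window $x[j, j+n-1]$ contains $u$ and so cannot lie in $W(M)$, contradicting the iterated conclusion.

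The main obstacle is not the counting decomposition, which is routine, but the final contradiction: pinning down both a position $i$ where the length-$n$ window of $x$ lies in $W(M)$ and a later position where $u$ appears. This is where the standing assumption of this subsection---that $x$ is recurrent---is essential, guaranteeing that $u$ (and every other subword of $x$) appears infinitely often to the right of any fixed position. The infinitude of $M$ enters through Morse-Hedlund, and the proper containment $M \subsetneq X$ is what supplies the distinguishing word $u$ in the first place.
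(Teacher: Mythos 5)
Your counting framework is sound: the decomposition $c_n(X)=c_n(M)+p_n$, the Morse--Hedlund bound $c_n(M)\ge n+1$ for an infinite (hence aperiodic) minimal $M$, the identity $p_{n+1}-p_n=\sum_{w}(r(w)-1)+E_n$, the deduction that $p_{n+1}=p_n$ forces $E_n=0$, and the forward propagation (one occurrence of a word of $W_n(M)$ in $x$ forces every window to its right into $W_n(M)$) are all correct. The genuine gap is in the very last step, where you justify the existence of an occurrence of $u$ at some position $j\ge i$ by asserting that recurrence guarantees every subword appears infinitely often \emph{to the right} of any fixed position. Under the paper's definition of recurrence (every subword appears infinitely many times in $x$, with no directional requirement) this is false, even in the exact setting of this lemma: take $x=\ell\,.\,2\,s$ with $s$ a one-sided Sturmian sequence over $\{0,1\}$ and $\ell$ built leftward by prepending, stage by stage, copies of ever longer central windows of $x$; every subword of $x$ then recurs (Sturmian factors recur inside $s$, everything else recurs in the prepended copies), $X=\overline{\mathcal{O}(x)}$ properly contains the infinite minimal Sturmian subshift, yet the word $u=2$ occurs only at positions $\le 0$. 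So ``by recurrence $u$ appears at some $j\ge i$'' does not follow for an arbitrary position $i$ carrying an $M$-window, and your contradiction is not yet established.

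The fact you actually need --- some occurrence of a word of $W_n(M)$ with an occurrence of $u$ weakly to its right --- is true, but it requires an argument you have not supplied. Two quick repairs: (1) run the symmetric count over length-$n$ suffixes; $p_{n+1}=p_n$ then also forces the left-extension analogue of $E_n$ to vanish, and propagating from a single $M$-window in both directions gives $W_n(x)\subseteq W_n(M)$, i.e.\ $p_n=0$, contradicting $p_n\ge 1$ with no positional bookkeeping at all; or (2) argue that if every occurrence of $u$ preceded every occurrence of every word of $W_n(M)$, then a subword of $x$ consisting of a $u$ followed later by an $M$-window would, by recurrence, occur at positions tending to $-\infty$, producing $M$-window occurrences to the left of a fixed $u$-occurrence, a contradiction. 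For comparison, the paper's proof sidesteps this issue entirely: it bounds $|W_n(X)\setminus W_n(M)|$ below by directly exhibiting $n-|w|+1$ distinct words containing $w$, anchored at the rightmost occurrence of $w$ to the left of a long $w$-free window, and handles the opposite orientation by reflecting $x$; your increment-counting route is genuinely different and, with either repair above, it does give the stated bound.
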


\begin{proof}
Suppose that $X$, $M$ are as in the theorem. Since $\overline{\mathcal{O}(x)} = X \neq M$, $x$ contains a subword not in $W(M)$, let's call it $w$. By shifting $x$ if necessary, we may assume that $w = x(0) \ldots x(|w| - 1)$. By recurrence, $x$ contains infinitely many occurrences of $w$. However, since $\overline{\mathcal{O}(x)} = X \supset M$, $x$ contains arbitrarily long subwords in $W(M)$, none of which may contain $w$. Choose any $n \geq |w|$, and consider a subword of $x$ of length $n$ which does not contain $w$; take it to be $x(k) \ldots x(k+n-1)$, and for now assume that $k > 0$. Now, the word $x(0) \ldots x(k + n - 1)$ contains $w$ at least once (as a prefix), so we may define the rightmost occurrence of $w$ within it; say this happens at 
$x(j) \ldots x(j + |w| - 1)$. Note that since $x(k) \ldots x(k + n - 1)$ contains no occurrences of 
$w$, we know that $j < k$. 

Finally, consider the $n$-letter subwords of $x$ defined by $u_i = x(i) \ldots x(i+n-1)$, where 
$j - n + |w| \leq i \leq j$. Each contains the occurrence of $w$ at $x(j) \ldots x(j + |w| - 1)$, and no occurrence of $w$ to the right, by definition of $j$. Therefore, all are distinct, and so $x$ contains $n - |w| + 1$ subwords of length $n$, which each contain $w$. 

On the other hand, since $M$ is an infinite minimal subshift, it is aperiodic, and so by Theorem~\ref{MH}, $W_n(M)$ contains at least $n + 1$ subwords of length $n$, none of which contain $w$ since $w \notin W(M)$. Since $M \subset X$, $W_n(M) \subset W_n(X)$, and so $c_n(X) > (n + 1) + (n - |w| + 1) = 2n - |w| + 2$ for all $n \geq |w|$, completing the proof when $k > 0$. Since the complexity function is unaffected by reflecting $x$ (and $w$) about the origin, the same holds when $k < 0$, completing the proof. 

\end{proof}

We now only need treat the case where $X$ contains only finite minimal subshifts (i.e. periodic orbits), and will first deal with the case where it contains more than one.

\begin{lemma}\label{minimal2}
If $X$ contains two minimal subshifts and it is not true that $x$ is eventually periodic in both directions, then there exists a constant $k$ so that $c_n(X) > 2n - k$ for all $n$.
\end{lemma}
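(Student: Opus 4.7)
The plan is to show that, for every sufficiently large $n$, the set $W_n(X)$ contains at least two right-special words; this yields $c_{n+1}(X) - c_n(X) \geq 2$ for large $n$, and hence $c_n(X) > 2n - k$ for all $n$ after enlarging $k$ to cover finitely many small exceptions. First I would dispose of any case where one of the two minimal subshifts inside $X$ is infinite, since then Lemma~\ref{minimal1} already supplies the desired bound. So it is enough to treat the case where both minimal subshifts are periodic orbits, which I will call $M_1$ and $M_2$ with minimal periods $p_1$ and $p_2$ respectively.

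A technical preliminary is that $W(M_1) \cap W(M_2)$ is finite. This follows from the Fine--Wilf theorem: any word of length at least $p_1 + p_2 - \gcd(p_1, p_2)$ lying in both $W(M_1)$ and $W(M_2)$ would have periods $p_1$ and $p_2$, hence period $\gcd(p_1, p_2)$, and this quickly forces $M_1 = M_2$. Choose $L$ so that $W_n(M_1) \cap W_n(M_2) = \emptyset$ for all $n \geq L$. Since $x$ is not eventually periodic in both directions, I may also assume (after reversing $x$ if needed, which preserves $c_n$ and swaps right-special with left-special words) that the right tail $x(0)x(1)x(2)\ldots$ is not eventually periodic.

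The heart of the argument is then the following claim: for each $n \geq \max(L, p_1, p_2)$ and each $i \in \{1, 2\}$, there exists some $w \in W_n(M_i)$ that is right-special in $W(X)$. Given this, the two resulting right-special words, one from $W_n(M_1)$ and one from $W_n(M_2)$, must be distinct since $n \geq L$, so we obtain two right-special words of length $n$ as required. To establish the claim I would argue by contradiction: if no $w \in W_n(M_i)$ is right-special, then each such $w$ has a unique right-extension in $W(X)$, which must equal its $M_i$-extension (because $w$ followed by the $M_i$-extension already lies in $W(M_i) \subset W(X)$). Choosing any $w \in W_n(M_i)$ and any occurrence of it in $x$ at some position $j$ (which exists because $W(X) = W(x)$), the letter $x(j+n)$ is then forced to equal the $M_i$-extension of $w$; the shifted window $x(j+1)\ldots x(j+n)$ again lies in $W_n(M_i)$, so the same reasoning determines $x(j+n+1)$, and iterating shows that $x(j)x(j+1)\ldots$ is a one-sided shift of a point in $M_i$, hence eventually periodic, contradicting our assumption on the right tail of $x$. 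I expect the Fine--Wilf step to require the most care, particularly when one of $p_1, p_2$ divides the other; once disjointness of $W_n(M_1)$ and $W_n(M_2)$ for large $n$ is in hand, the remainder is a clean induction on tail positions.
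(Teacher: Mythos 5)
Your proof is correct, but it runs along a genuinely different track from the paper's. The paper's argument is a direct word count: after reducing to two periodic orbits $M, M'$, it picks $k$ with $W_k(M) \cap W_k(M') = \emptyset$ (this needs no Fine--Wilf; it is immediate from the disjointness of the two minimal sets plus compactness), and then, using that $x$ contains arbitrarily long $M$-blocks and $M'$-blocks each followed by an ``exit'' from the corresponding language, it exhibits for each $n$ two explicit families of $n$-letter words, distinguished by the position of the leftmost non-$W_k(M)$ (resp.\ non-$W_k(M')$) block, giving $c_n(X) \geq 2n - 2k$ outright. You instead prove an increment bound: for every large $n$ there are at least two right-special words of length $n$, one in each $W_n(M_i)$, and then integrate $c_{n+1}(X) - c_n(X) \geq 2$. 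Your key lemma --- if no word of $W_n(M_i)$ were right-special, then from any occurrence of such a word the right tail of $x$ would be locked into $W(M_i)$ and hence eventually periodic --- is sound (the unique continuation must agree with the $M_i$-extension since $W(M_i) \subset W(X)$, and with $n \geq p_i$ the tail's $n$-window count is at most $p_i \leq n$, so Morse--Hedlund applies), and the off-by-one details are harmless. Two remarks: your Fine--Wilf detour is more machinery than needed, since disjointness of $W_n(M_1)$ and $W_n(M_2)$ for large $n$ follows from $M_1 \cap M_2 = \emptyset$ and compactness alone, without reference to the periods; on the other hand, your right-special viewpoint meshes well with how the rest of the paper operates (the proof of Theorem~\ref{structhm} and the construction in Section~\ref{tight} are both phrased in terms of right-special words), and your lemma that the absence of right-special words inside a periodic language forces eventual periodicity of the tail is a tidy, reusable observation. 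The constants differ only cosmetically: the paper gets $2n - 2k$ directly, while you get $2n - k$ after absorbing the finitely many small $n$.
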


\begin{proof}
Denote by $M$ and $M'$ minimal subshifts of $X$; by definition of minimality, $M$ and $M'$ are disjoint. If either is infinite, then we are done by Lemma~\ref{minimal1}. So, assume that both are finite, and therefore periodic orbits. Choose $k$ so that $W_k(M)$ and $W_k(M')$ are disjoint, and their union is strictly contained in $W_k(X)$. Since $X = \overline{\mathcal{O}(x)}$, $x$ contains arbitrarily long words in $W(M)$ and $W(M')$. As above, we may assume without loss of generality that $x$ is not eventually periodic to the right. Then, for all 
$n \geq k$, there exists $\ell$ so that $x(\ell) \ldots x(\ell + n - 1) \in W(M)$ and $x(\ell + n - k + 1) \ldots x(\ell + n) \notin W_k(M)$. Similarly, there exists $m$ so that $x(m) \ldots x(m + n - 1) \in W(M')$ and $x(m + n - k + 1) \ldots x(m + n) \notin W_k(M')$.

Define the $n$-letter words $u_i = x(\ell+i) \ldots x(\ell + i + n - 1)$ and $v_j = x(m+j) \ldots x(m +j + n - 1)$ for $0 < i, j \leq n - k$; clearly all are in $W_n(X)$. In each $u_i$, the leftmost $k$-letter word not in $W_k(M)$ is $u_i(n - k - i + 2) \ldots u_i(n - i + 1) = x(m + n - k + 1) \ldots x(m + n)$, and so all $u_i$ are distinct. The same argument (using $W_k(M')$) shows that all $v_j$ are distinct. Finally, all $u_i$ begin with a word in $W_k(M)$ and all $v_j$ begin with a word in $W_k(M')$, and so the sets $\{u_i\}$ and $\{v_j\}$ are also disjoint. Therefore, $c_n(X) \geq 2n - 2k$ for $n > k$, completing the proof.

\end{proof}

The remaining case is that $x$ is recurrent and that $X$ properly contains a periodic orbit $M$, which is the only minimal subshift contained in $X$. For simplicity, we assume that $M$ is a single fixed point, which we may do via the following lemma.

\begin{lemma}\label{0reduction}
Suppose that $x$ is recurrent and $X = \overline{\mathcal{O}(x)}$ strictly contains a periodic orbit $M$ which is the only minimal subshift contained in $X$. Then there is a sliding block code $\phi$ with the following properties: $\phi(X)$ has alphabet $\{0,1\}$, $\phi(X)$ strictly contains the unique minimal subshift $\{0^{\infty}\}$, and $\phi(w) = 0^i$ implies that $w \in W(M)$.
\end{lemma}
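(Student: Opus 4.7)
The plan is to define $\phi$ as a sliding block code with window size $k$, for $k$ slightly larger than the period of $M$, that outputs $0$ on a window exactly when that window is a subword of $M$ and $1$ otherwise. Concretely, let $p$ be the number of points in $M$ (equivalently, the least period of any point of $M$), fix $k \geq p+1$, and set $(\phi(x))(i) := 0$ if $x(i) x(i+1) \cdots x(i+k-1) \in W_k(M)$ and $(\phi(x))(i) := 1$ otherwise.

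The crucial technical ingredient is a rigidity statement for $W_k(M)$: since $M$ has least period $p$, for $k \geq p$ the map sending a phase in $\mathbb{Z}/p\mathbb{Z}$ to the corresponding $k$-letter window is a bijection onto $W_k(M)$, and for $k \geq p+1$ even the $(k-1)$-letter suffix of a window in $W_k(M)$ is enough to determine the phase. Consequently, if two words in $W_k(M)$ overlap in $k-1$ letters (suffix of the first equals prefix of the second), the second is forced to be the shift-by-one of the first along the orbit. A one-step induction then shows that any word of length $\geq k$ all of whose $k$-letter subwords lie in $W_k(M)$ is itself a subword of $M$. This gives the third bullet of the lemma directly: if $\phi(w) = 0^i$ then $|w| = k + i - 1 \geq k$ and every $k$-letter subword of $w$ is in $W_k(M)$, so $w \in W(M)$.

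The remaining properties are then straightforward. Applying $\phi$ to any point of $M$ yields $0^\infty$, so $\{0^\infty\} \subseteq \phi(X)$. Since $X \supsetneq M$ and $M$ is closed, there must exist some $w \in W(X) \setminus W(M)$, and we may assume $|w| \geq k$ by extending; then the rigidity claim forces $\phi(w)$ to contain at least one $1$, so $\phi(X) \supsetneq \{0^\infty\}$. For uniqueness of $\{0^\infty\}$ as the minimal subshift of $\phi(X)$: if $N \subseteq \phi(X)$ is any minimal subshift, then $\phi^{-1}(N) \cap X$ is a nonempty closed shift-invariant subset of $X$ and hence contains a minimal subshift of $X$, which by hypothesis must be $M$; thus $\phi(M) = \{0^\infty\} \subseteq N$, and minimality of $N$ forces $N = \{0^\infty\}$.

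The main obstacle in this plan is arranging the rigidity claim for $W_k(M)$, but this is essentially immediate once $k \geq p+1$ is chosen: everything else is bookkeeping using standard facts about factor maps and closedness of $M$.
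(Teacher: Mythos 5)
Your proposal is correct and is essentially the paper's own proof: the same window-$k$ coding with $k$ exceeding the period of $M$, the same overlap/periodicity argument showing $\phi(w)=0^i$ forces $w\in W(M)$, and the same preimage-of-a-minimal-subshift argument for uniqueness of $\{0^\infty\}$. Your "rigidity" discussion just spells out in more detail what the paper states tersely ("$w$ has period $p$ and begins with a $k$-letter word in $W(M)$").
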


\begin{proof}
Choose such $X$ and $M$, and choose any $k$ greater than the period $p$ of $M$. Define $\phi$ as follows: for every $i$, $(\phi(x))(i) = 0$ if $x(i) \ldots x(i + k - 1) \in W_k(M)$, and $1$ otherwise. Trivially $\phi(X)$ has alphabet $\{0,1\}$. If $\phi(w) = 0^i$, then $w$ has period $p$ (since all words in $W_k(M)$ have period $p$) and begins with a $k$-letter word in $W(M)$, and is therefore itself in $W(M)$. Since $X \supsetneq M$, $\phi(X)$ contains points other than $0^{\infty}$. Finally, if $\phi(X)$ contained a minimal subshift not equal to $\{0^{\infty}\}$, then it would be disjoint from $\{0^{\infty}\}$, and so its preimage would contain a minimal subshift of $X$ other than $M$, a contradiction. 

\end{proof}

\subsubsection{$x$ is recurrent, $x \in \{0,1\}^{\mathbb{Z}}$, $M = \{0^{\infty}\}$ is the only minimal subsystem of $X$}\label{hardcase}

\

\

In this case, $x$ must contain infinitely many $1$s (by recurrence) and must contain $0^n$ as a subword for every $n$ (since $0^{\infty} \in \overline{\mathcal{O}(x)} = X$). We will need the following slightly stronger fact.

\begin{lemma}\label{leftright}
For $x$ satisfying the conditions of this section, and for all $n$, $0^n 1$ and $1 0^n$ are subwords of $x$.
\end{lemma}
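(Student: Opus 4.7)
My plan is to argue by contradiction, establishing that $0^n 1 \in W(x)$; the assertion that $1 0^n \in W(x)$ then follows by applying the same argument to the reflection of $x$ about the origin, since all hypotheses of this subsection (recurrence, binary alphabet, $\{0^\infty\}$ the unique minimal subsystem) are invariant under reversing the orientation of the sequence.

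Suppose toward contradiction that $0^n 1 \notin W(x)$. I would first observe that any occurrence $x(k)\cdots x(k+n-1) = 0^n$ must be followed by a $0$, since the alternative $x(k+n) = 1$ would produce the forbidden pattern $0^n 1$. A direct induction then shows that any such occurrence of $0^n$ extends to an infinite run $x(k)x(k+1)\cdots = 0^\infty$ on the right. Since $0^\infty \in X$, the word $0^n$ genuinely occurs in $x$, so $x$ is eventually identically $0$; equivalently, letting $M$ denote the position of the rightmost $1$ in $x$ (which exists because $x$ has at least one $1$, as $x \neq 0^\infty$, and the $1$s are bounded above by the start of the terminal $0$-tail), one has $x(j) = 0$ for all $j > M$.

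Next I would apply recurrence. At position $M$ the segment $x(M)\cdots x(M+n) = 1 0^n$, so $1 0^n \in W(x)$; recurrence of $x$ then produces infinitely many occurrences of $1 0^n$, and in particular one at a position $j \neq M$. Because every $1$ in $x$ lies at a position $\leq M$, we have $j < M$. Now set $j^+ = \min\{i > j : x(i) = 1\}$, which exists because $x(M) = 1$ and $M > j$; our assumption that $x(j+1) = \cdots = x(j+n) = 0$ forces $j^+ > j + n$, and by the very definition of $j^+$ the entries $x(j+1), \ldots, x(j^+ - 1)$ are all $0$. Therefore $x(j^+ - n)\cdots x(j^+) = 0^n 1$, contradicting our hypothesis.

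The subtlest point, and where I would be most careful, is how to use the hypotheses. Long runs of $0$s exist in $x$ for the trivial reason that $0^\infty \in X$, so recurrence applied to $0^n$ itself yields nothing useful. The right move is to apply recurrence to the asymmetrically structured word $10^n$: this word is forced into $x$ by the existence of the rightmost $1$ at $M$, and its required second occurrence to the left of $M$ automatically creates a gap between consecutive $1$s of length exceeding $n$, which is precisely what produces the forbidden terminating block $0^n 1$. Once this piece is secured, the $1 0^n$ statement follows by the orientation-reversing symmetry noted at the outset.
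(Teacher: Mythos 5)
Your proof is correct and rests on essentially the same ideas as the paper's: the observation that forbidding $0^n1$ forces every occurrence of $0^n$ to propagate $0$s to the right, and the use of recurrence of one word of the pair $\{0^n1,\,10^n\}$ to produce the other by examining the $1$ bordering a long run of $0$s. The only difference is organizational --- the paper shows directly that at least one of the two words occurs and then transfers to the other via a second occurrence, while you argue by contradiction and obtain $10^n$ by reflection symmetry, a valid but cosmetic variation.
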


\begin{proof}
Choose any $n$. We know already that $0^n$ is a subword of $x$. If neither $0^n 1$ nor $1 0^n$ were subwords of $x$, then every occurrence of $0^n$ in $x$ would force $0$s on both sides, implying 
$x = 0^{\infty}$, a contradiction. Therefore, either $0^n 1$ or $1 0^n$ is a subword of $x$; assume without loss of generality that it is the former. Then by recurrence, $0^n 1$ appears twice as a subword of $x$, implying that $x$ contains a subword of the form $0^n 1 w 0^n 1$. Remove the terminal $1$, and consider the rightmost $1$ in the remaining word; it must be followed by $0^n$, and so $x$ also (in addition to $0^n 1$) contains $1 0^n$ as a subword. Since $n$ was arbitrary, this completes the proof. 

\end{proof}

By Lemma~\ref{leftright}, for every $n$ there exists a one-sided sequence $y_n$ beginning with $1$ so that $0^n y_n$ appears in $x$. By compactness, there exists a limit point $y$ of the $y_n$ (which begins with $1$), and then since $X$ is closed, $0^{\infty} y \in X$. Similarly, there exists a one-sided sequence $z$ ending with $1$ so that $z 0^{\infty} \in X$. We first treat the case whether either $y$ or $z$ is not unique.

\begin{theorem}\label{multyz}
For $x$ satisfying the conditions of this section, if there exist either $y \neq y' \in \{0,1\}^{\mathbb{N}}$ beginning with $1$ for which $0^{\infty} y, 0^{\infty} y' \in X$ or $z \neq z' \in \{0,1\}^{-\mathbb{N}}$ ending with $1$ for which $z 0^{\infty}, z' 0^{\infty} \in X$, then there exists $k$ so that $c_n(X) > 2n - k$ for all $n$.
\end{theorem}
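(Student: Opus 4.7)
The plan is to exploit directly the hypothesis that two distinct right (or left) infinite tails with all-zero left (or right) extensions lie in $X$, and to count $n$-letter subwords that align themselves with these tails at every possible offset. Because $0^{\infty}y \in X$ and $0^{\infty}y' \in X$, every finite subword of $0^{\infty}y$ and of $0^{\infty}y'$ lies in $W(X)=W(x)$. So for each $n$ I would define two families of $n$-letter words,
\[
u_i = 0^{n-i}\, y(1)\cdots y(i), \qquad u'_i = 0^{n-i}\, y'(1)\cdots y'(i), \qquad 0 \leq i \leq n,
\]
all of which lie in $W_n(X)$.

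The first observation is that since $y$ and $y'$ both start with the letter $1$, the word $u_i$ has exactly $n-i$ leading zeros (and $u'_i$ has exactly $n-i$ leading zeros), so within each family the words are pairwise distinct. Next, let $p$ be the least index where $y(p)\neq y'(p)$; then $y(1)\cdots y(i)=y'(1)\cdots y'(i)$ for $i<p$ and the two sequences disagree somewhere in the first $i$ letters for $i\geq p$. Since $u_i$ and $u'_j$ can coincide only when they have the same number of leading zeros (forcing $i=j$) and the same prefix of length $i$ of $y$ and $y'$ (forcing $i<p$), the total number of distinct words in $\{u_i\}\cup\{u'_i\}$ is
\[
p + 2(n-p+1) = 2n - p + 2.
\]
Thus $c_n(X)\geq 2n-p+2$, which gives $c_n(X) > 2n - k$ for $k := p$.

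For the symmetric case with distinct $z\neq z'$ ending in $1$ and $z0^\infty, z'0^\infty\in X$, the same reasoning applies with the roles reversed: one considers $v_i = z(-i)\cdots z(-1)\,0^{n-i}$ and $v'_i$ analogously, again concluding that the number of leading (here trailing) zeros distinguishes words within each family and that the two families agree only up to the first disagreement position of $z$ and $z'$. This produces the same estimate $c_n(X)\geq 2n-k$ for a suitable constant $k$.

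I do not expect a real obstacle: once one writes down the right combinatorial families, everything is forced. The only mild subtlety is making sure that the shift-by-$i$ words really are all in $W_n(X)$, which follows because $0^{\infty}y,0^{\infty}y'\in X=\overline{\mathcal{O}(x)}$ and $W_n(X)=W_n(x)$ as noted at the start of Section~\ref{proofs}, and the bookkeeping that shows the two families overlap only in the first $p$ positions; after that the count is immediate.
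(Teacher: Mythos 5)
Your proposal is correct and is essentially the paper's own argument: both build the families $0^{j}\,y(1)\cdots$ and $0^{j}\,y'(1)\cdots$ in $W_n(X)$, separate words within each family by the number of leading zeros (since $y(1)=y'(1)=1$), and separate the two families at the first index where $y$ and $y'$ disagree, yielding $c_n(X)\geq 2n-k+O(1)$; your indexing by prefix length rather than by zero-block length is only a cosmetic reparametrization. The $z,z'$ case is handled symmetrically in both, so no further comment is needed.
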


\begin{proof} 
We prove only the statement for $y,y'$, as the corresponding proof for $z,z'$ is trivially similar. Assume that such $y,y'$ exist. Since $y \neq y'$, there exists $k$ so that $y(k) \neq y'(k)$.

For any $n \geq k$, define the $n$-letter words $u_i = 0^i y(1) \ldots y(n-i)$ and $v_i = 0^i y'(1) \ldots y'(n-i)$ for $0 \leq i \leq n-k$. First, note that $u_i$ and $v_i$ both begin with $0^i 1$ for every $i$, and since $0^i 1$ is never a prefix of $0^j 1$ for $i \neq j$, the sets $\{u_i, v_i\}$ and $\{u_j, v_j\}$ are disjoint whenever $i \neq j$. Finally, for every $i$, 
$u_i(i+k) = y(k) \neq y'(k) = v_i(i+k)$, so $u_i \neq v_i$.
This yields $2n-2k+2$ words in $W_n(X)$ for $n \geq k$, completing the proof.
\end{proof}

We from now on assume that $y$ and $z$ are unique sequences beginning with $1$ and ending with $1$ respectively which satisfy $0^\infty y, z 0^\infty \in X$. 

\begin{theorem}\label{termyz}
For $x$ satisfying the conditions of the section, if either $y$ or $z$ contains only finitely many $1$s, then there exists $k$ so that $c_n(X) > 2n - k$ for every $n$.
\end{theorem}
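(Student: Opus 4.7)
The plan is to reduce, by reflecting $x$ about the origin if necessary (which preserves the complexity function), to the case that $y$ has only finitely many $1$s, so that $y = u 0^\infty$ for a finite word $u$ of length $\ell$ starting and ending with $1$. The key consequence is that $0^\infty u 0^\infty \in X$, which guarantees $0^N u 0^N \in W(x)$ for every $N \in \mathbb{N}$.

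This at once yields a first family of length-$n$ subwords of $x$, namely $\mathcal{F}_1 = \{0^a u 0^b : a + b + \ell = n,\ a, b \geq 0\}$, of cardinality $n - \ell + 1$. To push $c_n(X)$ past $2n - k$, I would construct a second family $\mathcal{F}_2$ of roughly $n$ additional length-$n$ subwords, disjoint from $\mathcal{F}_1$. The template for $\mathcal{F}_2$ is: locate a word $\tilde w$ of bounded length starting and ending with $1$ and satisfying $|\tilde w| > \ell$, such that $0^\infty \tilde w 0^\infty \in X$. Such a $\tilde w$ gives $\mathcal{F}_2 = \{0^a \tilde w 0^b : a + b + |\tilde w| = n,\ a, b \geq 0\}$ of cardinality $n - |\tilde w| + 1$, and $\mathcal{F}_2 \cap \mathcal{F}_1 = \emptyset$ because the first and last $1$s of any element of $\mathcal{F}_2$ are separated by $|\tilde w| - 1 > \ell - 1$ positions, while in $\mathcal{F}_1$ that separation is exactly $\ell - 1$. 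Combining, $c_n(X) \geq 2n - (\ell + |\tilde w|) + 2$, of the required form $2n - k$.

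The heart of the argument is the construction of $\tilde w$. Since $x$ is recurrent while no shift of $0^\infty u 0^\infty$ is, $x$ must contain subwords not present in $W(\overline{\mathcal{O}(0^\infty u 0^\infty)})$, namely subwords in which two distinct $u$-blocks (or analogous $1$-clusters) are visible. By a compactness argument applied to shifts of $x$, one aims to extract a point of the form $0^\infty \tilde w 0^\infty \in X$ for some bounded $\tilde w$ containing such richer $1$-structure. The analysis splits naturally according to whether $z$ also has finitely many $1$s (in which case one matches against $y$-uniqueness to conclude $z = 0^\infty u$, and then works with the $0$-run gap sequence $(a_i)$ between consecutive $u$-blocks of $x$, which is recurrent, aperiodic and unbounded) or $z$ has infinitely many $1$s (in which case the left-infinite $1$-pattern of $z$ itself feeds into the extraction).

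The main obstacle is precisely this extraction step. The rigidity imposed by uniqueness of $y$ means that many naive candidates for $\tilde w$ (such as $u 0^m u$) actually fail: after a sufficiently long $0$-run in $x$, the next $\ell$ letters are forced to be $u$ followed by a further long run of $0$s, so limit points of the form $0^\infty u 0^m u 0^\infty$ need not lie in $X$. One must instead identify a $\tilde w$ arising from an active region of $x$ whose internal structure differs from that of a single $u$-block — or from an appropriate joint limit of multiple active regions — and verify that its occurrences in $x$ admit simultaneously arbitrary $0$-padding on both sides. The argument that such $\tilde w$ must exist relies on the combination of recurrence, aperiodicity, the failure of uniform recurrence, and the $y$- (and possibly $z$-)uniqueness constraints; without all of these the extraction can fail.
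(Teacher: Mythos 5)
Your first family is exactly the one the paper uses, but your second family cannot be built, and the obstacle you flag as ``the heart of the argument'' is in fact insurmountable rather than merely delicate. At this point of the case analysis the sequence $y$ is assumed \emph{unique}: it is the only one-sided sequence beginning with $1$ such that $0^\infty y \in X$ (the non-unique case was already disposed of in Theorem~\ref{multyz}). Now suppose some $\tilde w$ beginning and ending with $1$ satisfied $0^a \tilde w 0^b \in W(x)$ for all $a,b$; by compactness this forces $0^\infty \tilde w 0^\infty \in X$, and writing this point as $0^\infty \cdot (\tilde w 0^\infty)$ with $\tilde w 0^\infty$ beginning with $1$, uniqueness of $y$ gives $\tilde w 0^\infty = y = u 0^\infty$, hence $\tilde w = u$ (both end with $1$). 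So there is \emph{no} admissible $\tilde w$ with $|\tilde w| > \ell$, and a second family consisting of $0$-padded words $0^a \tilde w 0^b$ simply does not exist. Your extraction step is not a gap to be filled by a cleverer limit argument; the hypotheses in force rule it out.

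The way around this, which is what the paper does, is to drop the requirement that the second family be flanked by $0$s. Since $x$ is recurrent and not $0^\infty$, it contains infinitely many $1$s, so it has some subword $v$ containing more than $\ell$ $1$s; and since $0^n \in W(x)$ for every $n$, occurrences of $v$ in $x$ have unbounded gaps, so for each $n$ there is an occurrence of $v$ with no further occurrence of $v$ within distance $n$ on (say) its right. Sliding a window of length $n$ across that occurrence produces $n - |v| + 1$ words of length $n$, pairwise distinct because the rightmost occurrence of $v$ sits at a different position in each; each of them contains more than $\ell$ $1$s, whereas every member of your $\mathcal{F}_1 = \{0^a u 0^b\}$ contains at most $\ell$ $1$s, so the two families are disjoint and $c_n(X) \geq 2n - |v| - \ell + 2$ for large $n$. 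The lesson is that the uniqueness of $y$ makes ``doubly $0$-padded'' words too rigid to supply a second linear family, and the count must instead come from words anchored at an extremal occurrence of a $1$-rich subword of $x$ itself.
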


\begin{proof}
We again treat only the $y$ case, as the proof for the $z$ case is similar. Suppose that $y$ contains only finitely many $1$s. Then, we can write $y = w 0^{\infty}$ for some $w$ beginning and ending with $1$, and $0^{\infty} y = 0^{\infty} w 0^{\infty} \in X$. By recurrence,
$x$ contains a subword $v$ which contains more than $|w|$ $1$s. 

Again, by recurrence $x$ contains $v$ infinitely many times. Also, $x$ contains the subword $0^n$ for all $n$, which never contains $v$. Therefore, for every $n$, there exists a word $u$ of length $n$ so that either $vu$ or $uv$ is a subword of $x$ and contains $v$ only once as a subword. We treat only the former case, as the latter is similar, and so suppose that $x(k) \ldots x(k+n+|v|-1) = vu$. 

For any $n \geq \max(|v|, |w|)$, consider the $n$-letter subwords of $x$ given by $t_j = x(j) \ldots x(j + n - 1)$ for $k-n+|v| \leq j \leq k$. The rightmost occurrence of $v$ within $t_j$ begins at the $(k - j + 1)$th letter of $t_j$, and so all $t_j$ are distinct. This yields $n - |v| + 1$ words in $W_n(X)$ which each contain $v$. On the other hand, we can define $u_i = 0^i w 0^{n - |w| - i}$ for $0 \leq i \leq n - |w|$, each of which is contained in $0^{\infty} w 0^{\infty} \in X$. Each $u_i$ contains $w$ exactly once, beginning at the $(i+1)$th letter, and so all are distinct. In addition, each $u_i$ contains at most $|w|$ $1$s, and so none contains $v$, meaning no $t_j$ and $u_i$ can be equal.

Therefore, for $n \geq \max(|v|, |w|)$, $c_n(X) \geq (n - |v| + 1) + (n - |w| + 1) = 2n - |v| - |w| + 2$, completing the proof.

\end{proof}

\begin{theorem}\label{inf0run}
For $x$ satisfying the conditions of the section, if the lengths of runs of $0$s in $y$ or $z$ are bounded, then there exists $k$ so that $c_n(X) > 2n - k$ for all $k$. 
\end{theorem}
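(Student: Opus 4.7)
My plan is to assume without loss of generality that the $0$-runs in $y$ are bounded by a constant $N$; the case of $z$ is symmetric, using $z 0^\infty \in X$ with a long terminal $0$-run in place of the long initial $0$-run. I would then prove the result in two main steps: first establish that $y$ cannot be eventually periodic, and then combine subwords of $y$ with certain length-$n$ prefixes of appropriate shifts of $0^\infty y$ to exhibit roughly $2n$ distinct words in $W_n(X)$.

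For the aperiodicity step, suppose toward contradiction that $y$ is eventually periodic with period $p$ past some threshold $a$, and set $v := y(a) y(a+1) \cdots y(a+p-1)$. Since Theorem~\ref{termyz} has already disposed of the case in which $y$ contains only finitely many $1$s, the block $v$ must contain at least one $1$, so $v \neq 0^p$. I would then consider the shifts $\sigma^{a+kp}(0^\infty y) \in X$ and let $k \to \infty$; coordinate by coordinate these converge to the bi-infinite periodic sequence $\bar v := \ldots v v v \ldots$, so by closedness of $X$ we have $\bar v \in X$. But the orbit of $\bar v$ is a finite periodic, hence minimal, subshift of $X$ distinct from $\{0^\infty\}$, contradicting the hypothesis that $\{0^\infty\}$ is the unique minimal subshift of $X$.

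Given that $y$ is not eventually periodic, the one-sided Morse--Hedlund theorem (Theorem~\ref{MH}) yields $c_n(y) \geq n+1$ for every $n$. All such length-$n$ subwords of $y$ lie in $W_n(X)$ because $0^\infty y \in X$, and each of them contains no $0$-run of length greater than $N$. Next, for each $j$ with $N+1 \leq j \leq n$, I would set $u_j := 0^j y(1) y(2) \cdots y(n-j)$, which is a length-$n$ subword of $0^\infty y$ and hence of $X$. Since $y(1) = 1$ and the internal $0$-runs of $y$ have length at most $N$, the longest $0$-run of $u_j$ is its initial block, of length exactly $j$; thus distinct values of $j$ give distinct $u_j$, and since each $u_j$ contains $0^{N+1}$ no $u_j$ is a length-$n$ subword of $y$. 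Combining these two disjoint families gives
\[
c_n(X) \;\geq\; (n+1) + (n - N) \;=\; 2n - N + 1
\]
for all sufficiently large $n$, which proves the theorem with $k = N$ after enlarging the constant to absorb the small-$n$ cases.

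The main obstacle is ruling out eventual periodicity of $y$; without this step the subwords of $y$ contribute only $O(1)$ many words of length $n$, and the overall count remains linear with slope~$1$. It is precisely the uniqueness of $\{0^\infty\}$ as a minimal subshift that blocks this possibility, by upgrading any eventual periodicity of $y$ to a competing periodic minimal subshift inside $X$.
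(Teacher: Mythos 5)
Your proof is correct, but it takes a genuinely different route from the paper's. The paper never touches Morse--Hedlund here: it builds two mirror-image marker families, $u_i = 0^i y(1)\ldots y(n-i)$ (distinct because each begins with $0^i1$) and $v_j = z(-j)\ldots z(-1)0^{n-j}$ (distinct for the symmetric reason), and separates the two families by observing that every $v_j$ ends in $0^k$ while no $u_i$ can, since $0^k$ is not a subword of $y$; this gives roughly $2n-2k$ words using both the right-limit point $0^\infty y$ and the left-limit point $z0^\infty$. You instead work entirely on the side whose $0$-runs are bounded: you first upgrade the uniqueness of the minimal subshift $\{0^\infty\}$ to a dynamical lemma (if $y$ were eventually periodic, the shifts $\sigma^{a+kp}(0^\infty y)$ would converge to a second periodic orbit in $X$), then harvest $n+1$ words from Morse--Hedlund applied to $y$ and add the $n-N$ words $0^jy(1)\ldots y(n-j)$, distinguishing the two families by whether a word contains $0^{N+1}$ and distinguishing the $u_j$ among themselves by their maximal $0$-run rather than by the prefix $0^j1$. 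What your version buys is a one-sided argument (the hypothesis and the conclusion machinery involve only $y$, or only $z$, in the same spirit as the paper's Theorem~\ref{lastcase}); what the paper's version buys is a purely combinatorial count needing neither the aperiodicity lemma nor Morse--Hedlund. One small remark: your appeal to Theorem~\ref{termyz} to guarantee that the period block $v$ contains a $1$ is harmless but unnecessary -- if the $0$-runs of the one-sided infinite sequence $y$ are bounded by $N$, then $y$ automatically contains infinitely many $1$s, so $v=0^p$ is impossible outright.
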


\begin{proof}
As usual, we treat only the $y$ case since the $z$ case is similar. Suppose that there exists $k$ so that $0^k$ is not a subword of $y$. Then, for any $n > k$, consider the $n$-letter words $u_i = 0^i y(1) \ldots y(n-i)$, $k \leq i < n$, and $v_j = z(-j) \ldots z(-1) 0^{n-j}$, $0 < j \leq n - k$. Each $u_i$ begins with $0^i 1$, and $0^i 1$ is never a prefix of $0^{i'} 1$ for $i \neq i'$, so all $u_i$ are distinct; a similar argument shows that all $v_j$ are distinct. In addition, all $v_j$ end with $0^k$, and all $u_i$ either have final $k$ letters containing $y(1) = 1$ or end with a $k$-letter subword of $y$, and in either case do not end with $0^k$. Therefore, no $u_i$ and $v_j$ can be equal, and so $c_n(X) \geq 2n - k$, completing the proof. 
\end{proof}

We finally arrive at the only case in which $\liminf (c_n(x) - 2n)$ may be $-\infty$: $y$ and $z$ contain infinitely many $1$s and arbitrarily long runs of $0$s. In this case, we instead prove the weaker bound from the conclusion of Theorem~\ref{mainthm}, and interestingly only require the stated hypotheses on $y$.

\begin{theorem}\label{lastcase}
For $x$ satisfying the conditions of the section, if $y$ contains infinitely many $1$s and contains $0^n$ as a subword for all $n$, then $\limsup (c_n(x) - 1.5n) = \infty$.
\end{theorem}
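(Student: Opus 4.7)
The plan is to work inside the specific point $\tilde{y} := 0^{\infty} y \in X$ and use $c_n(X) \geq c_n(\tilde{y})$. Every length-$n$ factor of $\tilde{y}$ is either a factor of $y$ or one of the ``straddlers'' $S_k := 0^k\, y(1) \cdots y(n-k)$ with $k \in \{1, \ldots, n-1\}$ that cross the $0^{\infty}$/$y$ boundary. Since each such $S_k$ has leading $0$-count exactly $k$ (as $y(1) = 1$), they are pairwise distinct, and one obtains the counting identity
\[
c_n(\tilde{y}) \;=\; c_n(y) \;+\; (n - 1) \;-\; \mathrm{overlap}(n), \qquad \mathrm{overlap}(n) := \#\bigl\{k \in \{1, \ldots, n-1\} : S_k \in W_n(y)\bigr\}.
\]
Because $y$ has arbitrarily long $0$-runs and infinitely many $1$s, its $0$-run sequence $(a_i)$ is both infinite and unbounded, so $y$ is not eventually periodic; applying Theorem~\ref{MH} to $y$ gives $c_n(y) \geq n+1$ for every $n$.

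Fixing $M$, the target $c_n(\tilde{y}) > 1.5 n + M$ is equivalent to $c_n(y) - \mathrm{overlap}(n) > \tfrac{1}{2} n + M + 1$. I would split into two cases. \emph{Case (i):} there exist arbitrarily large $n$ with $\mathrm{overlap}(n) \leq \tfrac{1}{2} n - M - 1$. Then Morse--Hedlund immediately yields $c_n(y) - \mathrm{overlap}(n) \geq (n + 1) - (\tfrac{1}{2} n - M - 1) = \tfrac{1}{2} n + M + 2$, so we are done.

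\emph{Case (ii):} for all sufficiently large $n$, at least $\tfrac{1}{2} n$ of the straddlers are already factors of $y$. Each such $S_k \in W_n(y)$ witnesses a position $p$ in $y$ at which a $0$-run of length $\geq k$ is immediately followed by the length-$(n-k)$ prefix of $y$, so $y$ contains a large supply of interior copies of initial segments of itself. Here I would exploit the aperiodicity of $y$: any interior copy of a prefix must eventually disagree with the true prefix, else the $0$-run sequence would be genuinely periodic. The first disagreement of each copy produces a length-$n$ factor of $y$ not accounted for by the straddlers themselves; organizing the many interior copies by the depth at which they first disagree with the start of $y$ and applying a pigeonhole/tree argument, one shows that $c_n(y)$ itself must outgrow $1.5 n + M$ along some subsequence.

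The main obstacle is Case~(ii): quantitatively converting the abundance of interior prefix-copies into new length-$n$ factors of $y$, while avoiding double-counting when many overlaps are generated by the same long $0$-run. The natural tool is a tree of prefix-copies labelled by their disagreement depths, with the aperiodicity of the $0$-run sequence supplying enough distinct branching events to yield the required $\tfrac{1}{2} n + M$ excess in $c_n(y)$, and hence the desired $\limsup(c_n(x) - 1.5n) = \infty$.
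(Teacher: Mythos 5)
Your reduction to $\tilde y = 0^\infty y \in X$, the identity $c_n(\tilde y) = c_n(y) + (n-1) - \mathrm{overlap}(n)$, and Case (i) are all fine, but the proof has a genuine gap: Case (ii) is not an argument, it is a restatement of the difficulty. Worse, Case (ii) is not a side case — it is the whole theorem. For recurrent points of the type that make this theorem delicate (in particular for the examples of Theorem~\ref{mainex}, which have exactly the form $0^\infty y$ with $y$ having infinitely many $1$s and unbounded $0$-runs), every straddler $0^k y(1)\cdots y(n-k)$ already occurs inside $y$: after any sufficiently long gap the sequence of gaps replays $g_1, g_2, \ldots$ for a long stretch, so $\mathrm{overlap}(n) = n-1$ and $c_n(\tilde y) = c_n(y)$ for all $n$. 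Thus your dichotomy buys nothing precisely on the extremal examples, and what you must prove in Case (ii) is the full statement $\limsup(c_n(y) - 1.5n) = \infty$ for the one-sided sequence $y$ itself, which you only gesture at via a ``tree of prefix-copies labelled by disagreement depths.'' Moreover, your stated target for Case (ii) is stronger than what your own identity requires when $\mathrm{overlap}(n)$ is only slightly above $n/2$ (there you only need $c_n(y) - \mathrm{overlap}(n) > n/2 + M + 1$, roughly Morse--Hedlund strength), so the case split at threshold $n/2$ does not line up with any mechanism that actually produces the extra $n/2 + M$ words; you give no such mechanism, and the double-counting problem you acknowledge (many interior prefix-copies explained by a single long $0$-run, with coinciding first-disagreement positions) is exactly where a naive pigeonhole fails.

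For contrast, the paper's proof does not try to count factors of $0^\infty y$ alone. It uses both one-sided limits $0^\infty y$ and $z0^\infty \in X$, and a specific choice of scales: pick $m$ so that $y(1)\cdots y(m)$ ends in $1$ and contains exactly $2k$ ones (hence contains no $0^{m-2k+1}$), let $\ell$ locate the first occurrence of $0^{m-2k+1}$ in $y$, and set $n = \ell + m - 2k$. The $n$ straddler-type words $u_i = 0^i y(1)\cdots y(n-i)$ are distinguished by their leading $0^i1$, the $\ell$ words $v_j = z(-j)\cdots z(-1)0^{n-j}$ are distinguished by the structure of $z$, and no $u_i$ can equal a $v_j$ because only the $v_j$ end in $0^{m-2k+1}$. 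The inequality $\ell \geq m$ (forced because the prefix of length $m$ has no long $0$-run) converts the count $n + \ell$ into $1.5n + k$. Your proposal has no analogue of this interplay between the number of $1$s in a prefix and the position of the first long $0$-run, which is where the constant $1.5$ actually comes from; without it, Case (ii) remains unproved.
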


\begin{proof}

For every $k$, choose $m \geq 2k$ so that $y(1) \ldots y(m)$ ends with $1$ and contains exactly $2k$ $1$s; note that $y(1) \ldots y(m)$ does not contain $0^{m - 2k + 1}$ as a subword. Then, choose $\ell$ so that $y(1) \ldots y(\ell) 0^{m - 2k + 1}$ is a prefix of $y$ and contains $0^{m - 2k + 1}$ only at the end, i.e. $y(\ell + 1) \ldots y(\ell + m - 2k + 1)$ is the first occurrence of $0^{m - 2k + 1}$ in $y$. Clearly, $\ell \geq m$ since $y(1) \ldots y(m)$ ends with $1$ and did not contain $0^{m - 2k + 1}$. Also, by definition of $\ell$, $y(1) \ldots y(\ell) 0^{m - 2k} = y(1) \ldots y(\ell + m - 2k)$ does not contain $0^{m - 2k + 1}$.


Now, consider the $(\ell + m - 2k)$-letter words defined by $u_i = 0^i y(1) \ldots y(\ell + m - 2k - i)$, $0 \leq i < \ell + m - 2k$, and $v_j = z(-j) \ldots z(-1) 0^{\ell + m - 2k - j}$, $0 \leq j < \ell$. 
Again, since each $u_i$ begins with $0^i 1$, all $u_i$ are distinct; similarly, all $v_j$ are distinct. In addition, all $v_j$ end with $0^{m - 2k + 1}$, and all $u_i$ either have final $m - 2k + 1$ letters containing $y(1) = 1$ or end with a subword of $y(1) \ldots y(\ell + m - 2k)$, and in either case do not end with $0^{m - 2k + 1}$. Therefore, no $u_i$ and $v_j$ can be equal, and so $c_{\ell + m - 2k}(X) \geq (\ell + m - 2k) + \ell = 2\ell + m - 2k$.

Recall that $\ell \geq m$; therefore, $2\ell + m - 2k \geq 1.5\ell + 1.5m - 2k = 1.5(\ell + m - 2k) + k$. In other words, for $n = \ell + m - 2k$, $c_n(X) \geq 1.5n + k$. Since $k$ was arbitrary, $c_n(x) - 1.5n$ is unbounded from above, completing the proof.

\end{proof}

We are now prepared to combine the results from the previous subsections to prove Theorem~\ref{mainthm}.

\begin{proof}[Proof of Theorem~\ref{mainthm}]

We assume that $X$ is not minimal and that it is not the case that $x$ is eventually periodic in both directions. 
By Lemma~\ref{nonreclem}, if $x$ is non-recurrent, then $\liminf (c_n(X) - 2n) > - \infty$, implying that $\limsup (c_n(X) - 1.5n) = \infty$.
By Lemmas~\ref{minimal1} and \ref{minimal2}, if $X$ contains either two minimal subsystems or an infinite minimal subsystem, then $\liminf (c_n(X) - 2n) > - \infty$, implying that $\limsup (c_n(X) - 1.5n) = \infty$.

So, we can assume that $x$ is recurrent and that $X$ properly contains a unique minimal subsystem, which is finite. Take the sliding block code $\phi$ (with window size $k$) guaranteed by Lemma~\ref{0reduction}. If we define $y = \phi(x)$ and $Y = \phi(X)$, then $Y = \overline{\mathcal{O}(y)}$ has alphabet $\{0,1\}$, strictly contains the unique minimal subshift 
$\{0^{\infty}\}$, and (since $\phi$ has window size $k$) satisfies $c_n(X) \geq c_{n-k+1}(Y)$ for all $n$.

By Theorems~\ref{multyz}, \ref{termyz}, \ref{inf0run}, and \ref{lastcase}, $\limsup (c_n(Y) - 1.5n) = \infty$, and since $c_n(X) \geq c_{n-k+1}(Y)$ for all $n$, it must be the case that $\limsup (c_n(X) - 1.5n) = \infty$, completing the proof.

\end{proof}

\subsection{Proof of Theorem~\ref{mainex}}\label{tight}

Fix any nondecreasing unbounded $g: \mathbb{N} \rightarrow \mathbb{R}$. Clearly there exist $N$ and a nondecreasing unbounded $f: \mathbb{N} \rightarrow \mathbb{N}$ so that
$f(n) + 1 \leq g(n)$ for all $n > N$.

We will construct a point $x \in \{0,1\}^{\mathbb{Z}}$ of the following form 
$$x = 0^{\infty}.\ 1\  0^{g_1} \ 1\  0^{g_2}\  1\  0^{g_3}\  1\  0^{g_4}\ 1 \ldots$$
where all $g_i \geq 1$. We will refer to these numbers $\{g_i\}$ as the \emph{gaps} (between 
1s in $x$). Next we describe how the gaps are defined. 

We will construct an increasing sequence of natural numbers $n_1 < n_2 < n_3 < \cdots$, and for every $i$ define
$$g_i = n_k \text{ if $i$ is the product of $2^k$ and an odd natural number where $k\geq 0$.}$$
As such, $x$ will have the form 
$$x = 0^{\infty}.\ 1\ 0^{n_0}\ 1\ 0^{n_1}\ 1\ 0^{n_0}\ 1\ 0^{n_2}\ 1\ 0^{n_0}\  1\  0^{n_1}\ 1\ 0^{n_0}\ 1\ 0^{n_3}\ldots $$

Our goal is to show that 
the natural numbers $n_0< n_1 < n_2 < \cdots$ may be chosen so that
$c_n(x) < 1.5n + 1 + f(n)$ for all $n \in \mathbb{N}$; since $1.5 n + 1 + f(n) \leq 1.5n + g(n)$ for $n > N$, we will then be done.

We will establish this by consideration of right-special words occurring in $x$ of various lengths. 
First note that for any $j \geq 1$, $0^j$ is a right-special word: both $0^{j+1}$ and $0^j1$ appear in 
$x$. 

Set $w(0)=0^{n_0}10^{n_0}$, and for $k \geq 1$, let $w(k)$ be the unique word in $x$ of the form $w=0^{n_k}1u10^{n_k}$ where $u$ has no occurrence of $0^{n_k}$. The uniqueness of $w(k)$ can be seen from noting that a gap of $n_k$ or longer must correspond to the $i$th gap in $x$ where $i$ is multiple of $2^{k}$. By the construction of $x$, 
the sequence of gaps that occur between any two consecutive gaps of $n_k$ or more are always the same and are equal to $g_1, g_2, \ldots , g_{2^{k}-1}$. Thus 
$$w(k) = 0^{n_k}1 0^{g_1} 1 0^{g_2}1 \cdots 1 0^{g_{2^k-1}} 1 0^{n_k} = 0^{n_k} 1 0^{n_0} 1 0^{n_1} 1 0^{n_0} 1 0^{n_2} \cdots 0^{n_2} 1 0^{n_0} 1 0^{n_1} 1 0^{n_0} 1 0^{n_k}.$$
We make a series of claims about the words $w(k)$ for all $k \geq 0$. 

Claim 1: Every $w(k)$ is right-special. Because there are gaps of exactly $n_k$, $w(k)1$ occurs in $x$, and because there are gaps larger than $n_k$, $w(k)0$ occurs in $x$. 

Claim 2: Neither $0w(k)$ nor $1w(k)$ are right-special. 
Given two consecutive multiples of $2^k$, one is the product of an odd natural number and $2^k$ and the other is a multiple of $2^{k+1}$. Therefore, given two consecutive gaps of $n_k$ or longer, one is exactly $n_k$ and the other is strictly more than $n_k$. Therefore, neither $0w(k)0$ nor $1w(k)1$ occur in $x$, but both $0w(k)1$ and $1w(k)0$ occur in $x$. 

Claim 3: Any right-special word $w$ is a suffix of $w(k)$ for some $k>0$. Clearly, $w=0^n$ is a suffix of $w(k)$ for $k$ large enough that $n_k>n$. Now assume $w$ is a right-special word of the form $u10^n$ for some word $u$ and some 
$n \geq 0$. Then $w1$ occurs in $x$, meaning that $n = n_k$ for some $k \geq 0$. Therefore, $w=u10^{n_k}$. 
If $|w| \leq |w(k)|$ then $w$ is a suffix of $w(k)$ by the uniqueness of $w(k)$. Now assume $|w| > |w(k)|$. Then again by the uniqueness of $w(k)$, $w = vw(k)$ for some word $v$ with $|v|>1$. But since any suffix of a right-special word is right-special, this implies that either $0w(k)$ or $1w(k)$ is right-special, contradicting Claim 2. 

Next we give a recursive formula for $|w(k)|$. 
Between any two consecutive multiples of $2^k$, for $ 1 \leq j  \leq k$, 
there are $2^{j-1}$ odd multiples of $2^{k-j}$. Therefore, in $w(k)$ we have 
two runs of $n_k$ 0s, $2^{k}$ 1s, and $2^{j-1}$ gaps of $n_{k-j}$ for $0 < j \leq k$. This gives  
$$|w(k)| = 2^{k} + 2 n_k + \sum_{j=1}^{k} 2^{j-1} n_{k-j} = 2^{k} + 2 n_k + \sum_{j=0}^{k-1} 2^{k-j-1} n_{j}.$$

In order to analyze $c_n(x)$, we consider the number of right-special words of length $n$. 
For all $n \geq 1$, we have $0^n$ and for any $n \in (n_k,|w(k)|]$, we have a suffix of $w(k)$ that contains at least one 1. In what follows, we will always recursively choose the sequence $\{n_k\}$ so that $n_k > |w(k-1)|$, implying that the intervals $(n_k,|w(k)|]$ are pairwise disjoint. Therefore, for some values of $n$ we will have exactly one right-special word ($0^n$), and for $n$ which are in $(n_k,|w(k)|]$ for some $k$, we have exactly two right-special words ($0^n$ and the suffix of $w(k)$ of length $n$).

Let $R = \mathbb{N} \cap \bigcup_{k \geq 0} (n_k,|w(k)|]$. 
For $n \in R$ there are two right-special words of length $n$, 
and for $n \not \in R$ there is just one right-special word of length $n$. 
This gives us the recursion formula 
$$c_{n+1}(X) = c_{n}(X) + 1 + \left|\{n\} \cap R \right|.$$
From this, and the fact that $c_1(X) = 2$ it follows that 
\begin{equation}\label{rtspecialbd}
c_n(X) = n + 1 + \left|\{1,2,\ldots ,n-1\} \cap R \right|
\end{equation}
for all $n \geq 1$. 

It remains to show that the sequence $n_0 < n_1 < n_2 < \cdots$ can be chosen 
so that $\left|\{1,2,\ldots ,n-1\}  \cap R \right| < 0.5 n + f(n)$ for all $n \in \mathbb{N}$.
First, we choose $n_0 = 1$, meaning that $|w(0)| = 2n_0 + 1 = 3$. Then clearly 
$\left|\{1,2,\ldots ,n-1\} \cap R \right| \leq 2 < 0.5n + f(n)$ for $n \leq 3 = |w(0)|$.





\subsubsection{Choice of $n_k$, $k\geq 1$}

Suppose $n_0, n_1, \ldots, n_{k-1}$ have been chosen so that \newline 
$\left|\{1,2,\ldots ,n-1\}  \cap R \right| < 0.5n + f(n)$ 
for all $n \leq |w(k-1)|$. Choose 
$n_k$ so that $f(n_k)$ is greater than

\begin{equation}\label{bigk}
0.5|w(k)| - n_k + |R \cap \{1, \ldots, |w(k-1)|\}| = 2^{k-1} + \sum_{j=0}^{k-1} 2^{k-2-j} n_j + |R \cap \{1, \ldots, |w(k-1)|\}|.
\end{equation}

For $n \in (|w(k-1)|,n_k)$, we have 
\begin{align*} \left|\{1,2,\ldots ,n-1\}  \cap R \right| & = 
\left|\{1,2,\ldots ,|w(k-1)|\}  \cap R \right| \\
& <  0.5(|w(k-1)| + 1) + f(|w(k-1)| + 1) \leq 0.5n + f(n).
\end{align*}
For $n \in [n_k,|w(k)|)$ we have 
\begin{align*}
\left|\{1,2,\ldots ,n-1\}  \cap R \right| 
& = n - n_k + \left|\{1,2,\ldots ,|w(k-1)|\}  \cap R \right|\\
& = 0.5 n + \left(0.5 n - n_k + \left|\{1,2,\ldots ,|w(k-1)|\}  \cap R \right|\right)\\
& < 0.5 n + \left(0.5 |w(k)| - n_k + \left|\{1,2,\ldots ,|w(k-1)|\} \cap R \right|\right)\\
& < 0.5 n + f(n_k) \leq 0.5 n + f(n). 
\end{align*}
(The second-to-last inequality came from (\ref{bigk}).) We've shown that $\left|\{1,2,\ldots ,n-1\}  \cap R \right| < 0.5n + f(n)$ for all $n$, and so by (\ref{rtspecialbd}), $c_n(X) < 1.5n + f(n) + 1$ for all $n$.
Since $f(n) + 1 \leq g(n)$ for $n > N$, this means that $c_n(X) < 1.5n + g(n)$ for $n > N$, completing the proof.

\subsection{Proofs of Theorems~\ref{structhm} and \ref{mainthm2}}\label{structure}


\begin{proof}[Proof of Theorem~\ref{structhm}]

We first note that by Lemmas~\ref{nonreclem}, \ref{minimal1}, and \ref{minimal2}, $\liminf (c_n(X) - 2n) = -\infty$ implies that $x$ is recurrent and that $X$ properly contains a periodic orbit $M$, which is the unique minimal subshift contained in $X$. We will for now assume that $M = \{0^{\infty}\}$ and that $X$ has alphabet $\{0,1\}$, and will then extend to the general case by Lemma~\ref{0reduction}.

By Lemma~\ref{leftright}, $0^n 1 \in W(X)$ for all $n$, and so $0^n$ is right-special for all $n$. Therefore, for any $n$ where there is another right-special word in $W_n(X)$, $c_{n+1}(X) - c_n(X) \geq 2$.

For any $N$, choose $n$ so that $c_n(X) - 2n \leq -N$, and define 
$S = \{j < n \ : \ 0^j$ is the only right-special word in $W_j(X)\}$. Then $\displaystyle c_n(X) = \sum_{j = 0}^{n-1} (c_{j+1}(X) - c_j(X)) \geq |S| + 2(n - |S|)$, and so $|S| \geq N$. Define $m$ to be the maximal element of $S$; then $0^m$ is the only right-special word in $W_m(X)$, $m \geq N$, and $\displaystyle c_m(X) = c_n(X) - \sum_{j = m}^{n-1} (c_{j+1}(X) - c_j(X)) \leq 
2n - 2(n-m) = 2m$.

We claim that every word in $W_{3m}(X)$ contains $0^m$, and so that every subword of length $3m$ of $x$ contains $0^m$. Suppose for a contradiction that this is false, i.e. that there is $y \in X$ where $y(1) \ldots y(3m)$ does not contain $0^m$. Since $c_m(X) \leq 2m$, there exist $1 \leq i < j \leq 2m$ so that $y(i) \ldots y(i + m - 1) = y(j) \ldots y(j + m - 1)$. 
Also, all $m$-letter words $y(k) \ldots y(k + m - 1)$ for $i \leq k \leq j$ are not $0^m$ and so, since $m \in S$, are not right-special, i.e. there is only one letter that can follow each of them in a point of $X$. This means that $y(i) y(i+1) \ldots$ is in fact periodic with period $j - i$. Since 
$y(i) \ldots y(j + m - 1)$ does not contain $0^m$ (as a subword of $y(1) \ldots y(3m)$), 
$y(i) y(i+1) \ldots$ cannot contain $0^m$, a contradiction to $0^\infty$ being the only minimal subsystem of $X$. This means that the original claim was true, completing the proof in the case $M = \{0^{\infty}\}$.


Now suppose that $M$ is an arbitrary periodic orbit, and take the sliding block code $\phi$ (with window size $k$) guaranteed by Lemma~\ref{0reduction}. As before, define $y = \phi(x)$ and $Y = \phi(X)$; then $Y$ has alphabet $\{0,1\}$, strictly contains the unique minimal subshift $\{0^{\infty}\}$, and satisfies $c_n(X) \geq c_{n-k+1}(Y)$ for all $n$, implying that $\liminf (c_n(Y) - 2n) = -\infty$.

From the above proof, for all $N$, there exists $m \geq N$ so that every $3m$-letter subword of $y$ contains $0^m$. Every $(3m+3k)$-letter subword of $x$ has image under $\phi$ which is a $(3m+2k)$-letter subword of $y$, and therefore contains $0^m$. By definition of $\phi$, $x$ contains an $(m+k)$-letter word at the corresponding location which is in $W(M)$; since $m + k \geq m \geq N$, the proof is complete. 


\end{proof}

The proof of Theorem~\ref{mainthm2} uses a few basic notions from ergodic theory, which we briefly and informally summarize here. Firstly, a (shift-invariant Borel probability) measure $\mu$ is called \textbf{ergodic} if every measurable set $A$ with $A = \sigma A$ has $\mu(A) \in \{0,1\}$. Ergodic measures are valuable because of the \textbf{pointwise ergodic theorem}, which says that $\mu$-almost every point $x$ in $X$ is \textbf{generic for} $\mu$, which means that for every $f \in C(X)$, $\displaystyle \frac{1}{n} \sum_{i=0}^{n-1} f(\sigma^i x) \rightarrow \int f \ d\mu$. 
For the purposes of the proof below, we need only a very simple application of the ergodic theorem: for any generic point for $\mu$, the frequency of $0$ symbols is equal to $\mu([0])$, where $[0]$ is the set of $z \in X$ containing a $0$ at the origin. Finally, the \textbf{ergodic decomposition theorem} states that every (shift-invariant Borel probability) measure is a sort of generalized convex combination of ergodic measures. Again, we need only a very simple corollary: if a subshift has only one ergodic measure, then it has only one (shift-invariant Borel probability) measure. For a more detailed introduction to ergodic theory, see \cite{walters}.

\begin{proof}[Proof of Theorem~\ref{mainthm2}]


Assume that $x$ is not uniformly recurrent, it is not the case that $x$ is periodic in both directions, and $\liminf (c_n(X) - 2n) = -\infty$. Then as above, $X$ properly contains a periodic orbit $M$, which is the unique minimal subshift contained in $X$. We again first treat the case where $M = \{0^{\infty}\}$. Assume for a contradiction that $X$ has a (shift-invariant Borel probability) measure $\mu$ not equal to $\delta_{0^{\infty}}$. Since $\delta_{0^{\infty}}$ is obviously ergodic, by ergodic decomposition we may assume without loss of generality that $\mu$ is ergodic.

By ergodicity, $\mu(0^{\infty}) = 0$, and so there exists $j$ so that $\mu([0^j]) < 1/6$. Since $\mu$ is ergodic, by the pointwise ergodic theorem there is a  point $z$ which is generic for $\mu$. Note that the only periodic orbit in $X$ is $\{0^{\infty}\}$, and so $z$ cannot be eventually periodic in both directions, since then it would be generic for $\delta_{0^\infty}$. In addition, note that $\overline{\mathcal{O}(z)}$ must contain $M = \{0^{\infty}\}$ (since $M$ is the unique minimal subshift contained in $X$), and so $z$ is not uniformly recurrent.

Finally, since $\liminf (c_n(X) - 2n) = -\infty$, we know that $\liminf (c_n(z) - 2n) = -\infty$, and so $z$ satisfies the hypotheses of Theorem~\ref{structhm}. We apply that theorem with $N = 2j$ to find $m \geq 2j$ for which every $3m$-letter subword of $z$ contains $0^m$. Then, the frequency of occurrences of $0^j$ in $z$ is at least $\frac{m-j}{3m}$, which is greater than or equal to $1/6$ since $m \geq 2j$. By genericity of $z$ for $\mu$, $\mu([0^j]) \geq 1/6$, contradicting the definition of $j$ and so the existence of $\mu$.

Now, suppose that $M$ is an arbitrary periodic orbit, define the sliding block code $\phi$ (with window size $k$) guaranteed by Lemma~\ref{0reduction}, and again define $y = \phi(x)$ and $Y = \phi(X)$. As usual, $Y$ has alphabet $\{0,1\}$, strictly contains the unique minimal subshift $\{0^{\infty}\}$, and satisfies $c_n(X) \geq c_{n-k+1}(Y)$ for all $n$. We note that $y$ cannot be eventually periodic in both directions; if it were, then it would have to begin and end with infinitely many $0$s, which would imply that $x$ was eventually periodic in both directions, a contradiction. 

Finally, since $c_n(X) \geq c_{n-k+1}(Y)$ for all $n$, $\liminf (c_n(Y) - 2n) = -\infty$. So, by the proof above in the $M = \{0^{\infty}\}$ case, $Y$ has unique invariant measure $\delta_{0^{\infty}}$. Any invariant measure $\nu$ in $X$ then must have pushforward $\delta_{0^{\infty}}$ under $\phi$, and so must have $\nu(M) = 1$. It is easily checked that there is only one such $\nu$, namely the measure equidistributed over the points of $M$.

\end{proof}


\bibliographystyle{plain}
\bibliography{1.5n}

\end{document}